\newcommand{\newsection}[1]
{\section{#1}\setcounter{theorem}{0} \setcounter{equation}{0} \par\noindent}
\newtheorem{theorem}{Theorem}
\newtheorem{lemma}[theorem]{Lemma}
\newtheorem{corollary}[theorem]{Corollary}
\newtheorem{remark}[theorem]{Remark}
\newtheorem{claim}[theorem]{Claim}
\newcommand{\beq}{ \begin{equation} }
\newcommand{\eeq}{ \end{equation} }
\newcommand{\br}{{\mathbb R}}
\newcommand{\bc}{{\mathbb C}}
\newcommand{\vep}{ \varepsilon }
\newcommand{\brn}{ { \mathbb{R}^n } }
\renewcommand{\Re}{\operatorname{Re}}
\newcommand{\rmtwo}{{\rm{I}\hspace{-1pt}\rm{I}}}
\newcommand{\rmthree}{{\rm{I}\hspace{-1pt}\rm{I}\hspace{-1pt}\rm{I}}}
\newcommand{\rmfour}{{\rm{I}\hspace{-1pt}\rm{V}}}
\newcommand{\Sin}{\mathrm{Sin}^{-1}}
\title{
Finite time blow-up
of semi-linear Klein-Gordon equations \\
with positive initial energy
in FLRW spacetimes
}
\author
{
Makoto NAKAMURA
\thanks
{Graduate School of Information Science and Technology,  
Osaka University, 
1-5 Yamadaoka, Suita, Osaka 565-0871, JAPAN.  
E-mail:  \texttt{makoto.nakamura.ist@osaka-u.ac.jp} }
\ \ and \ 
Takuma YOSHIZUMI
\thanks
{Graduate School of Information Science and Technology,  
Osaka University, 
1-5 Yamadaoka, Suita, Osaka 565-0871, JAPAN.  
E-mail:  \texttt{yoshizumi.takuma@ist.osaka-u.ac.jp} }
}
\date{}
\begin{document}

\maketitle

\begin{abstract}
Blowing-up solutions for semi-linear Klein-Gordon equations are considered in Friedmann-Lema\^itre-Robertson-Walker spacetimes. Some sufficient conditions are shown by
applying the concavity method for semi-linear wave equations in the Minkowski spacetime to semi-linear Klein-Gordon equations in FLRW spacetimes.
\end{abstract}

\noindent
{\it Mathematics Subject Classification (2020)}: 
Primary 35L05; Secondary 35L71, 35Q75. 

\vspace{5pt}

\noindent
{\it Keywords} : 
semilinear Klein-Gordon equation, blowing-up solution, 
Friedmann-Lema\^itre-Robertson-Walker spacetime

%

\section{Introduction}

We consider the Cauchy problem of semi-linear Klein-Gordon equations in Friedmann-Lema\^itre-Robertson-Walker spacetimes (FLRW spacetimes for short).
FLRW spacetimes are solutions of the Einstein equations 
with the cosmological constant under the cosmological principle.
They describe the spatial expansion or contraction, 
and yield some important models of the universe.
Let $n\ge1$ be the spatial dimension, 
$a(\cdot)$ be a scale-function defined on an interval $[0,T_0)$ for some 
$0< T_0\le\infty$, $c>0$ be the speed of light.
The metrics $\{g_{\alpha\beta}\}$ of FLRW spacetimes are expressed by 
\begin{equation}
 \label{Intro-RW}
 -c^2(d\tau)^2
 =
 \sum_{0\le \alpha,\beta\le n}g_{\alpha\beta}dx^\alpha dx^\beta
 :=
 -c^2(dt)^2+
 a(t)^{2}
 \sum_{j=1}^n (dx^j)^2,
\end{equation}
where we have put the spatial curvature as zero,  
the variable $\tau$ denotes the proper time, 
$x^0=t$ is the time-variable 
(see e.g., 
\cite{Carroll-2004-Addison, DInverno-1992-Oxford}).
When $a(\cdot)=1$ or $a(t)=e^{Ht}$ for the Hubble constant $H\in \br$, 
the spacetime with \eqref{Intro-RW} reduces to the Minkowski spacetime or the de Sitter spacetime, respectively.

%
%

We recall the derivation of the Klein-Gordon equation.
We denote the first and second derivatives of one variable function $a$ by $\dot a$
and $\ddot a$.
The Klein-Gordon equation generated by the above metric 
$(g_{\alpha\beta})_{0\le \alpha,\beta\le n}$ is given by 
$-(\sqrt{|g|})^{-1} \partial_\alpha (\sqrt{|g|} g^{\alpha\beta}\partial_\beta u)+m^2u=f(u)$
for the determinant $g:=\det (g_{\alpha\beta})$ and the inverse matrix $(g^{\alpha\beta})$,
i.e.,  
\beq
 \label{Cauchy-0}
 \partial_t^2u+\frac{n\dot a}{a}\partial_tu-\frac{c^2}{a^2} \Delta u 
 +m^2c^2u=c^2 f(u),
\eeq
where 
$\Delta:=\sum_{j=1}^n \partial_j^2$ denotes the Laplacian. 

In this paper,
we consider the Cauchy problem of \eqref{Cauchy-0} given by
\beq
 \label{Cauchy}
 \left\{
 \begin{array}{l}
  \partial_t^2u+n\dot a(t)a(t)^{-1}\partial_tu-c^2a(t)^{-2} \Delta u +m^2c^2u
  =c^2 f(u),
  \\
  u(t_0,\cdot)=u_0(\cdot),\ \ \partial_tu(t_0,\cdot)=u_1(\cdot)
 \end{array}
 \right.
\eeq
for $(t,x)\in [t_0,T)\times\br^n$
with
$0\le t_0<T\le T_0$, where
$u_0$, $u_1$ are given initial data. For semi-linear terms, we assume that 
$f : \bc \rightarrow \bc$ satisfies that there exists $F : \bc \rightarrow \br$ and $\varepsilon>0$ such that 
 \beq
  \label{f-condition}  
 \partial_t F(u)=\Re \left\{f(u)\partial_t \overline{u}\right\} \ \ 
  \mbox{and} \ \ 
  \Re \left\{f(u)\overline{u}\right\} \ge (2+\vep)F(u)
 \eeq
for any $u : [t_0,T) \rightarrow \bc$.
We note that if we set
$f : \br \rightarrow \br$,
then the condition on \eqref{f-condition} is described as
\[
   F(u):=\int_{0}^s f(\xi)d\xi \ \ 
   \mbox{and} \ \ 
  f(u)u\ge(2+\varepsilon)F(u) 
\]
for $u : [t_0,T)\rightarrow\br$, which has been considered in \cite{McCollum-Mwamba-Oliver-2024-NA, Yang-Xu-2018-ApplMathLetters}. The assumption \eqref{f-condition} includes the power-type non-linearity. If we take $f(u)=|u|^{p-1}u$ for $u\in\bc$ with $p>1$, then we have to set $\varepsilon\le p-1$. If we take $f(u)=-|u|^{p-1}u$ for $u\in\bc$ with $p>1$, then we have to set $\vep\ge p-1$. For another example, we note that $f(u)=\pm|u|^{p}$ for $u\in\br$ with $p>1$, then we have to set $\vep=p-1$.

We denote the Lebesgue space by $L^q(I)$ for an interval $I\subset \br$ 
and $1\le q\le \infty$ with the norm 
\[
\|Y\|_{L^q(I)}:=
\begin{cases}
\left\{
\int_I |Y(t)|^q dt
\right\}^{1/q} & \mbox{if}\ \ 1\le q<\infty,
\\
{\mbox{ess.}\sup}_{t\in I} |Y(t)| 
& \mbox{if}\ \ q=\infty.
\end{cases}
\]
We denote by $\|\cdot\|$ the $L^2(\brn)$ norm, and by $\|\cdot\|_q$ the $L^q(\brn)$ norm for $q\neq2$.
We also denote $(u,v):=\int_\brn u\overline{v} dx$ by the $L^2$-inner product.

We define the energy functional 
 \begin{equation}
  \label{Def-Energy}
  E(t):=
  \frac12\|u_t\| ^2+\frac12c^2a^{-2}\|\nabla u\| ^2
  +\frac12m^2c^2\|u\| ^2-c^2\int_\brn F(u) dx,
 \end{equation}
the Nehari functional
 \beq
  \label{Def-I}
  I(u):=c^2a^{-2}\|\nabla u\|^2+m^2c^2\|u\|^2-c^2\Re\int_\brn \overline{u}f(u) dx
 \eeq
and the unstable set
 \beq
  \label{Def-B}
  \mathcal{B}:=\{ u \in C([t_0,T), H^1(\brn))\cap C^1([t_0,T),L^2(\brn)) \, ; \, I(u)<0\}
 \eeq
for the Cauchy problem of \eqref{Cauchy}.
We note that the Nehari functional $I(u)$ depends on time.
The set $\mathcal{B}$ is invariant under our assumptions for initial data.
More precisely, if $I(u_0)<0$ holds, then we have $u\in\mathcal{B}$ (see, Lemma \ref{B-invariant} and Lemma \ref{B-invariant-IP}, below).

%
%

\vspace{-11pt}

\begin{table}[h]
 \caption{Initial data leading to high energy blowup of the problem \eqref{Cauchy}.}
 \label{table}
 \centering
 \small
  \begin{tabular}{clll}
   \hline   
   \multicolumn{3}{c}{
   Under $I(u_0)<0$} 
                          & Progress \\
   \hline    \vspace{-5.5pt} \\
   Case {\rm{I}} \ \  
                           & $\frac{\widetilde{m}^2\widetilde{c}^2\vep}{2(\vep+2)}\|u_0\|^2
                                 >E(t_0)$
                               \ \  \ \ 
                           & $E(t_0)\ge
                                \frac{\widetilde{m}^2\widetilde{c}^2\vep}{2(\vep+2)}
                                                                                \Re(u_0,u_1)\ge0$                       
                           & Solved in this paper 
                              \vspace{5.5pt}\\
   Case {\rmtwo} \ \  
                             & $\frac{\widetilde{m}^2\widetilde{c}^2\vep}{2(\vep+2)}\|u_0\|^2
                                   >E(t_0)$
                             & $\frac{\widetilde{m}^2\widetilde{c}^2\vep}{2(\vep+2)}                
                                  \Re(u_0,u_1)>E(t_0)\ge0$ 
                             & Solved in this paper 
                                \vspace{5.5pt}\\
   Case {\rmthree} \ \ 
                               & $E(t_0)\ge
                                    \frac{\widetilde{m}^2\widetilde{c}^2\vep}{2(\vep+2)}\|u_0\|^2$
                               & $\frac{\widetilde{m}^2\widetilde{c}^2\vep}{2(\vep+2)}
                                    \Re(u_0,u_1)>E(t_0)\ge0$  
                               & Solved in this paper 
                                \vspace{5.5pt}\\
   Case {\rmfour} \ \  
                             & $E(t_0)\ge
                                  \frac{\widetilde{m}^2\widetilde{c}^2\vep}{2(\vep+2)}\|u_0\|^2$  
                             & $E(t_0)\ge
                                  \frac{\widetilde{m}^2\widetilde{c}^2\vep}{2(\vep+2)}
                                                                                  \Re(u_0,u_1)\ge0$  
                             & Still open
                               \vspace{5.5pt}\\
   \hline
  \end{tabular}
\end{table}

In \cite{Nakamura-2014-JMAA}, the existence of global solutions was proved for semi-linear Klein-Gordon equations in de Sitter spacetime with $n\le4$. This result was extended to the case of general FLRW spacetimes in \cite{Galstian-Yagdjian-2015-NA-TMA, Nakamura-Yoshizumi-9999}.
Galstian and Yagdjian in \cite{Galstian-Yagdjian-2015-NA-TMA} also proved the local well-posedness with $n\ge1$ for semi-linear Klein-Gordon equations in FLRW spacetimes for any $u_0\in H^1(\brn)$, $u_1\in L^2(\brn)$ with $\dot a(\cdot)>0$. 

The nonexistence of the global solution of the Klein-Gordon equation in the Minkowski spacetime (i.e., $\dot a\equiv0$) was shown by Levine in \cite{Levine-1974-TransAMS} when the initial energy was negative. We also refer to the result \cite{ZhangJian-2002-NA}, which established a sharp condition for blowup of one in the Minkowski spacetime when the initial energy had an upper bound. When the initial energy was arbitrarily high, the first result was a condition which Wang in \cite{Y. Wang-2008-PAMS} showed for blowing-up solutions corresponding to the Klein-Gordon equation in the Minkowski spacetime.
This result was extended by Yang and Xu in \cite{Yang-Xu-2018-ApplMathLetters}. 

On the other hand, for FLRW spacetimes,
the case of the gauge invariant semi-linear term of the form $f(u)=\lambda|u|^{p-1}u$
with $1<p<\infty$, $\lambda\in\bc$ was considered in \cite{Nakamura-Yoshizumi-9999} for $\dot a\le0$ by the concavity method for $a^2\|u\|^2$.   
Moreover, McCollum, Mwamba and Oliver in \cite{McCollum-Mwamba-Oliver-2024-NA} showed a condition for  blowing-up solutions of \eqref{Cauchy} for large initial data with the positive initial energy.

Our work is to solve Case ${\rm{I}}$, Case ${\rmtwo}$ and Case ${\rmthree}$ in Table \ref{table} under $\widetilde{m}>0$ and $\widetilde{c}>0$ defined by \eqref{m-c-tilde}, below.
Case ${\rmfour}$ is still open.

We show the following theorem and corollary for blowing-up solutions of \eqref{Cauchy} for large initial data.
Put $a_0:=a(0)$ and $a_1:=\dot a(0)$.

%
%

\begin{theorem}\label{Thm-blow-1}
Let $m\in\br$.
Let $f\in C(\bc,\bc)$ be a function with \eqref{f-condition} for some $\vep>0$.
Assume that $t_0=0$, and $u_0\in H^1(\brn)$, $u_1\in L^2(\brn)$ satisfy
 \beq
   \label{Thm-A-1000}
  \rho:=
  \frac{m^2c^2\varepsilon }{2(\varepsilon+2)}\|u_0\| ^2-E(0)
  >0
 \eeq
for $E(\cdot)$ given by \eqref{Def-Energy}.
Assume $\Re (u_0,u_1)\ge0$.
Let $0<T_0\le\infty$.
Let $a\in C^2([0,T_0),(0,\infty))$ satisfy
$\dot a(t)\ge0$ and
 \beq
   \label{Thm-A-4000}
  \dot a(t)^{2}-\ddot a(t)a(t)\ge0
 \eeq
for any $t\in[0,T_0)$.
Put
 \beq
  \label{Estimate-T_1}
    T:=\max\left\{1,
            \frac{\pi^2(1+na_1a_0^{-1})\|u_0\|^2}              
                   {\vep^2\rho}  
           \right\}.
 \eeq
Then, the solution $u$ of \eqref{Cauchy} blows-up in $L^2(\brn)$ in finite time no later than $T$ if $T\le T_0$.
More precisely, $\|u\|\rightarrow\infty$ as $t\nearrow T_*$ for some $T_*$
with $0<T_*\le T$.
\end{theorem}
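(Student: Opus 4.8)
The plan is to run the concavity (Levine) method on the single scalar quantity $M(t):=\|u(t)\|^2$, using the two structural hypotheses \eqref{f-condition} and \eqref{Thm-A-4000} to reduce \eqref{Cauchy} to a one-dimensional differential inequality that forces $M$ to become infinite in finite time. First I would record the energy law: differentiating \eqref{Def-Energy}, substituting \eqref{Cauchy}, integrating $\Re(u_t,\Delta u)$ by parts and using the first identity in \eqref{f-condition} to rewrite $\partial_t\int_\brn F(u)\,dx$, all interior terms cancel and one is left with $E'(t)=-n\dot a a^{-1}\|u_t\|^2-c^2a^{-3}\dot a\|\nabla u\|^2\le0$, since $\dot a\ge0$; hence $E(t)\le E(0)$. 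Differentiating $M$ twice, using \eqref{Cauchy}, $\Re(u,\Delta u)=-\|\nabla u\|^2$ and the definition \eqref{Def-I}, yields the exact identity
\begin{equation*}
M''(t)+\frac{n\dot a}{a}M'(t)=2\|u_t\|^2-2I(u).
\end{equation*}

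Next I would feed the second inequality in \eqref{f-condition}, integrated against $\overline u$ (so that $\Re\int_\brn \overline u f(u)\,dx\ge(2+\vep)\int_\brn F(u)\,dx$), into \eqref{Def-I} and \eqref{Def-Energy}, and use $E(t)\le E(0)$. This converts the identity into
\begin{equation*}
M''(t)+\frac{n\dot a}{a}M'(t)\ge(4+\vep)\|u_t\|^2+\vep m^2c^2\bigl(M(t)-M(0)\bigr)+2(2+\vep)\rho,
\end{equation*}
where the last two terms are exactly $\vep m^2c^2\|u\|^2-2(2+\vep)E(0)$ rewritten via the definition \eqref{Thm-A-1000} of $\rho$. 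This is the engine of the proof, and the threshold coefficient $\tfrac{m^2c^2\vep}{2(\vep+2)}$ in \eqref{Thm-A-1000} is precisely what renders the constant term $2(2+\vep)\rho$ strictly positive.

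The two hypotheses then enter as follows. Because $\Re(u_0,u_1)\ge0$ we have $M'(0)\ge0$; multiplying the displayed inequality by the integrating factor $a^n$ gives $(a^nM')'\ge 2(2+\vep)\rho\,a^n>0$ as long as $M\ge M(0)$, so a continuity/bootstrap argument yields $M'(t)\ge0$ and $M(t)\ge M(0)$ throughout the existence interval. Condition \eqref{Thm-A-4000} is equivalent to $(\dot a/a)'\le0$, so the damping coefficient is controlled by its initial value, $n\dot a a^{-1}\le na_1a_0^{-1}=:D$. Combining this with the Cauchy--Schwarz bound $(M')^2\le4M\|u_t\|^2$ and discarding the nonnegative term $\vep m^2c^2(M-M(0))$, I obtain the closed concavity inequality $M M''-(1+\tfrac{\vep}{4})(M')^2\ge 2(2+\vep)\rho\,M-D\,M M'$. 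Setting $z:=M^{-\vep/4}$ turns this into
\begin{equation*}
z''+Dz'\le-\frac{\vep(2+\vep)}{2}\,\rho\,z^{1+4/\vep}\le0,\qquad z(0)=\|u_0\|^{-\vep/2}>0,\quad z'(0)\le0.
\end{equation*}

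It then remains to show that $z$ reaches $0$ (equivalently $\|u\|\to\infty$) no later than the time $T$ in \eqref{Estimate-T_1}, and this quantitative step is the main obstacle. When $D=0$ one multiplies by $z'\le0$ and integrates to produce an explicit extinction integral of the form $\int_0^{z(0)}(\cdots)^{-1/2}\,dz$; the substitution $z=z(0)s$ collapses the prefactor to $\|u_0\|^2/(\vep^2\rho)$ up to constants, while the remaining integral $\int_0^1(1-s^{2+4/\vep})^{-1/2}\,ds\le\int_0^1(1-s^2)^{-1/2}\,ds=\pi/2$ supplies the $\pi^2$. Since the resulting bound controls the \emph{square} of the extinction time, the auxiliary $\max\{1,\cdot\}$ in \eqref{Estimate-T_1} lets one pass from $T_*\le\sqrt{(\cdot)}$ to $T_*\le(\cdot)=T$. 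The genuine difficulty is the damping: $Dz'$ works against extinction and the nonlinear right-hand side cannot be replaced by a linear one uniformly down to $z=0$, so the comparison must be carried out for the full damped, superlinear inequality (for instance through the energy $\tfrac12(z')^2+\tfrac{\beta}{q+1}z^{q+1}$ and an integrating-factor estimate), and it is this analysis that contributes the factor $(1+na_1a_0^{-1})$ to $T$.
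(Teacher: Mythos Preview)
Your reduction to the scalar inequality $z''+Dz'\le-\beta z^{1+4/\vep}$ with $z=M^{-\vep/4}$, $D=na_1a_0^{-1}$, $\beta=\tfrac{\vep(\vep+2)}{2}\rho$ is correct, and your arcsine estimate for the undamped case $D=0$ is exactly the content of the paper's Lemma~\ref{Lem-concavity}. The gap is the damped step. You assert that an ``integrating-factor estimate'' on the energy $\mathcal E=\tfrac12(z')^2+\tfrac{\beta}{q+1}z^{q+1}$ will manufacture precisely the factor $(1+na_1a_0^{-1})$, but you do not carry this out, and there is a real obstruction: multiplying your inequality by $z'\le0$ only gives $\mathcal E'\ge -D(z')^2\ge-2D\mathcal E$, hence $\mathcal E(t)\ge e^{-2Dt}\mathcal E(0)$, and the resulting lower bound $(z')^2\ge\tfrac{2\beta}{q+1}\bigl(e^{-2Dt}z_0^{q+1}-z^{q+1}\bigr)$ is vacuous until $z$ has already decayed below $e^{-2Dt/(q+1)}z_0$. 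Extracting from this a clean linear-in-$D$ extinction bound matching \eqref{Estimate-T_1} is not routine; in the overdamped regime the first-order balance $Dz'\approx-\beta z^q$ yields only algebraic decay $z\sim t^{-1/(q-1)}$ and infinite extinction time, so the quantitative argument must exploit the second-order term in an essential way, which you have not done.

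The paper avoids this difficulty by a structural trick: it replaces $M=\|u\|^2$ by
\[
\theta(t)=\|u\|^2+\int_0^t n\Bigl\{\dot a a^{-1}\|u\|^2+G\Bigr\}\,d\tau+n(T-t)a_1a_0^{-1}\|u_0\|^2,
\qquad
G(t)=\int_0^t\frac{\dot a^2-\ddot a a}{a^2}\|u\|^2\,d\tau,
\]
so that the identity $\theta''=2\|u_t\|^2-2I(u)$ holds \emph{with no damping term at all}; the Levine inequality $\theta\theta''-\tfrac{\vep+4}{4}(\theta')^2\ge 2(\vep+2)\rho\,\theta$ then follows exactly as in your undamped analysis, via a Cauchy--Schwarz bound on $(\theta')^2$. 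The last summand in $\theta$ is tuned to make $\theta'(0)=2\Re(u_0,u_1)\ge0$, and the price is $\theta(0)=(1+na_1a_0^{-1}T)\|u_0\|^2\le(1+na_1a_0^{-1})T\|u_0\|^2$ once $T\ge1$: \emph{this} initial-value inflation, not any damped ODE analysis, is where the factor $(1+na_1a_0^{-1})$ in \eqref{Estimate-T_1} actually comes from. Finally $\theta\to\infty$ is converted back to $\|u\|\to\infty$ by bounding the integral terms in $\theta$ above via the monotonicity of $\|u\|$ and $(\dot a/a)'\le0$. The missing idea in your proposal is therefore to build the functional so that the damping is absorbed into $\theta'$ rather than surviving as a coefficient in front of $z'$.
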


%
%

\begin{remark}

Let $f\in C(\bc,\bc)$ satisfy the following conditions

\beq
\label{Rem-1000}
f(0)=0, \ \ \ 
|f(s)-f(v)|\le C|s-v|\left(|s|^{p-1}+|v|^{p-1}\right)
\eeq
for any $s,v\in \bc$ and some constant $C>0$ independent of $s, v$, and 
\beq
\label{Rem-2000}
1<p<
\begin{cases}
\infty & \mbox{if \, } n=1,2, \\
1+\frac{2}{n-2} & \mbox{if \, } n\ge3.
\end{cases}
\eeq
Then, we note that the condition \eqref{Thm-A-1000} does not hold for sufficiently small data since $\rho$ is rewritten by 
 \begin{align*}
  \rho&=-\frac12\|u_1\|^2-\frac12c^2a^{-2}_0\|\nabla u_0\|^2
          -\frac{m^2c^2}{\vep+2}\|u_0\|^2+c^2\int_\brn F(u_0)dx \\
        &\le
            -\frac12\|u_1\|^2-\frac12c^2a^{-2}_0\|\nabla u_0\|^2
          -\frac{m^2c^2}{\vep+2}\|u_0\|^2+\frac{c^2}{\vep+2}\int_\brn |u_0|^{p+1}dx 
 \end{align*}
and we have $\rho\le0$ if $u_0$ is sufficiently small.

\end{remark}

%
%

\begin{theorem}\label{Thm-blow-B}
Let $m\in\br$.
Let $f\in C(\bc,\bc)$ be a function with \eqref{f-condition} for some $\vep>0$.
Let $0\le t_0<T_0\le\infty$.
Let $a\in C^2([0,T_0),(0,\infty))$ satisfy
$\dot a(t)\ge0$ and \eqref{Thm-A-4000}
for any $t\in[t_0,T_0)$.
Assume 
 \beq
   \label{t_0-condition}
  \frac{\dot a(t_0)}{a(t_0)}
   \begin{cases}
    \le \frac{|m|c\left\{\sqrt{\vep(\vep+4)}-\vep\right\}}{2n} & \mbox{if} \ \ m\neq0, \\
    < \infty & \mbox{if} \ \ m=0.
   \end{cases}
 \eeq
Let
$u_0\in H^1(\brn)$, $u_1\in L^2(\brn)$ satisfy
 \beq
   \label{Thm-B-1000}
  \delta:=
  \frac{|m|c\varepsilon }{2(\varepsilon+2)}\Re(u_0,u_1)-E(t_0)
  >0,
 \eeq
$I(u_0)<0$ and $\Re (u_0,u_1)\ge0$.
Put
 \beq
  \label{Estimate-T_delta}
    T:=t_0+\max\left[1,
           \frac{2\pi^2(\vep+4)\{1+n\dot a(t_0)a(t_0)^{-1}\}\|u_0\|^2}              
                  {\vep^2(\vep+2)\delta}  
           \right].
 \eeq
Then, the solution $u$ of \eqref{Cauchy} blows-up in $L^2(\brn)$ in finite time no later than $T$ if $T\le T_0$.
More precisely, $\|u\|\rightarrow\infty$ as $t\nearrow T_*$ for some $T_*$
with $t_0<T_*\le T$.
\end{theorem}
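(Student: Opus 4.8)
The plan is to run the concavity (Levine) method on the single functional $M(t):=\|u\|^2$, adapting it to the FLRW damping. First I would record the two structural facts that make the Minkowski argument portable. Differentiating \eqref{Def-Energy} along \eqref{Cauchy} and using the first identity in \eqref{f-condition} gives $E'(t)=-na^{-1}\dot a\|u_t\|^2-c^2a^{-3}\dot a\|\nabla u\|^2\le0$, so $E(t)\le E(t_0)$ since $\dot a\ge0$; and \eqref{Thm-A-4000} is precisely the statement $(d/dt)(\dot a/a)=(\ddot a a-\dot a^2)/a^2\le0$, so the damping coefficient is non-increasing and $n\dot a(t)a(t)^{-1}\le n\dot a(t_0)a(t_0)^{-1}$ for all $t\ge t_0$. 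These let me freeze the time-dependent coefficients at $t_0$. Next, from \eqref{Cauchy} and integration by parts I would compute $M'(t)=2\Re(u,u_t)$ and $M''(t)=2\|u_t\|^2-na^{-1}\dot a\,M'(t)-2I(u)$, where $I$ is the Nehari functional \eqref{Def-I}.

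To control the sign of the damping term and to keep $M$ increasing, I would invoke the invariance of the unstable set: since $I(u_0)<0$ and the hypotheses \eqref{t_0-condition}, \eqref{Thm-B-1000}, $\Re(u_0,u_1)\ge0$ hold, Lemma \ref{B-invariant} and Lemma \ref{B-invariant-IP} give $u(t)\in\mathcal{B}$, i.e.\ $I(u(t))<0$ on $[t_0,T)$. Because $M''=2\|u_t\|^2-2I(u)-na^{-1}\dot a\,M'$, at any instant where $M'=0$ one has $M''\ge-2I(u)>0$; together with $M'(t_0)=2\Re(u_0,u_1)\ge0$ this forces $M'(t)\ge0$, hence $M(t)\ge\|u_0\|^2$, throughout. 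Now I would use the second inequality in \eqref{f-condition} together with \eqref{Def-Energy} to eliminate $\int_\brn F(u)\,dx$, obtaining $-2I(u)\ge(2+\vep)\|u_t\|^2+\vep c^2a^{-2}\|\nabla u\|^2+\vep m^2c^2\|u\|^2-2(2+\vep)E(t)$. Feeding $E(t)\le E(t_0)$, discarding the nonnegative gradient term, and using $M'\ge0$ to freeze the damping yields the closed inequality
\[
 M''(t)\ge(4+\vep)\|u_t\|^2+\vep m^2c^2M(t)-2(2+\vep)E(t_0)-\frac{n\dot a(t_0)}{a(t_0)}M'(t).
\]

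With $M'\ge0$ and $M\ge\|u_0\|^2$ in hand I would close the argument by concavity. The Cauchy--Schwarz bound $(M')^2=4(\Re(u,u_t))^2\le4M\|u_t\|^2$ turns the displayed inequality, after multiplication by $M>0$, into
\[
 MM''\ge\tfrac{4+\vep}{4}(M')^2+\vep m^2c^2M^2-2(2+\vep)E(t_0)M-\frac{n\dot a(t_0)}{a(t_0)}MM',
\]
so that with $\alpha:=\vep/4>0$ the function $N:=M^{-\alpha}$ satisfies
\[
 N''+\frac{n\dot a(t_0)}{a(t_0)}N'\le-\alpha M^{-\alpha-1}\left[\vep m^2c^2M-2(2+\vep)E(t_0)\right].
\]
The aim is to show that $N$ is driven to zero in finite time, which is exactly the blow-up $\|u\|\to\infty$; comparing $N$ with a solution of the associated linear oscillator-with-damping ODE that integrates explicitly (this is where $\Sin$ and the $\pi^2$ appearing in \eqref{Estimate-T_delta} originate) then produces the quantitative lifespan $T$.

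The main obstacle is precisely the positive-energy, momentum-driven regime distinguishing Theorem \ref{Thm-blow-B} from Theorem \ref{Thm-blow-1}: the bracket $\vep m^2c^2M-2(2+\vep)E(t_0)$ need not be nonnegative at $t=t_0$, since here $\|u_0\|^2$ may lie below the threshold $2(2+\vep)E(t_0)/(\vep m^2c^2)$. This is where $\delta>0$ enters: it bounds $E(t_0)$ by a multiple of the initial momentum $\Re(u_0,u_1)=\tfrac12M'(t_0)$, so that the large initial momentum first forces $M$ upward, and the linear-in-$|m|c$ shape of \eqref{Thm-B-1000} is the signature of the comparison frequency $\sqrt{\vep m^2c^2}=\sqrt{\vep}\,|m|c$ governing this phase. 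The smallness condition \eqref{t_0-condition}, through the discriminant $\sqrt{\vep(\vep+4)}$, is what prevents the damping from destroying concavity once the coefficients are frozen at $t_0$ (equivalently, it keeps the effective exponent positive and the oscillator comparison valid). Reconciling these two effects — pushing $M$ past the mass threshold by means of $\delta$ while simultaneously absorbing the damping under \eqref{t_0-condition} — is the crux, and I expect it to require either a two-stage comparison or a single carefully weighted functional.
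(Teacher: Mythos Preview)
Your proposal correctly identifies the central difficulty---that the bracket $\vep m^2c^2 M - 2(\vep+2)E(t_0)$ need not be nonnegative under \eqref{Thm-B-1000}---but does not resolve it, and your chosen set-up actually blocks the resolution. By taking $\alpha=\vep/4$ (equivalently $\widetilde\kappa=\vep+1$, the choice of Theorem~\ref{Thm-blow-1}) you spend \emph{all} of the term $(4+\vep)\|u_t\|^2$ on the Cauchy--Schwarz step $MM''\ge\tfrac{4+\vep}{4}(M')^2+\cdots$. After that, only the bracket remains, and you have no mechanism to make it positive without the hypothesis \eqref{Thm-A-1000}. The paper instead chooses a \emph{smaller} exponent $\widetilde\kappa=\vep/2+1$ (so $\kappa=\vep/8$), which reserves a surplus $\tfrac{\vep}{2}\|u_t\|^2$ after the concavity step. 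That surplus is combined with $\vep m^2c^2\|u\|^2$ via the AM--GM inequality to produce $|m|c\vep\,\Re(u,u_t)$, and the monotonicity $\Re(u,u_t)\ge\Re(u_0,u_1)$---established inside the proof of Lemma~\ref{B-invariant-IP} from \eqref{t_0-condition} and \eqref{Thm-B-1000} via $L''>0$---converts this directly into $|m|c\vep\,\Re(u_0,u_1)-2(\vep+2)E(t_0)=2(\vep+2)\delta>0$. This AM--GM/monotonicity device is the missing idea; it is also precisely where the linear-in-$|m|c$ shape of \eqref{Thm-B-1000} and the discriminant $\sqrt{\vep(\vep+4)}$ in \eqref{t_0-condition} enter the argument (the latter is used in Lemma~\ref{B-invariant-IP}, not merely to ``keep the effective exponent positive'').

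There is a second structural difference. You work with $M=\|u\|^2$ and freeze the damping at $t_0$, leaving an explicit first-order term $\tfrac{n\dot a(t_0)}{a(t_0)}N'$ in the final differential inequality; this is not covered by the paper's comparison Lemma~\ref{Lem-concavity} and would require a separate ODE analysis. The paper avoids this by running concavity on the augmented functional
\[
\theta(t)=\|u\|^2+\int_{t_0}^t n\bigl\{\dot a a^{-1}\|u\|^2+G\bigr\}\,d\tau+n(T-t)\dot a(t_0)a(t_0)^{-1}\|u_0\|^2,
\]
designed so that the damping is absorbed into $\theta''$ (cf.\ \eqref{A-9}), and the resulting inequality for $y=\theta^{-\kappa}$ is the undamped second-order form $y''\le-\kappa A\,y^{1+1/\kappa}$ to which Lemma~\ref{Lem-concavity} applies directly. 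The ``single carefully weighted functional'' you anticipate at the end is exactly this $\theta$, but both its design and the reduced exponent $\kappa=\vep/8$ are essential to closing the argument.
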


When $f\in C(\bc,\bc)$ satisfies \eqref{Rem-1000} with \eqref{Rem-2000},
we note that the condition \eqref{Thm-B-1000} does not hold for sufficiently small data
(see, Appendix in \cite{McCollum-Mwamba-Oliver-2024-NA}).

%
%

\begin{remark}

For $m\in\br$ and $c>0$, we put 
 \beq
   \label{m-c-tilde}
  \widetilde{m}:=\min\{1,|m|\}, \ \ \widetilde{c}:=\min\{1,c\}.
 \eeq
If we have 
 \begin{equation*}
  \frac{\widetilde{m}^2\widetilde{c}^2\varepsilon}{2(\varepsilon+2)}\|u_0\| ^2>E(t_0),
 \end{equation*}
then \eqref{Thm-A-1000} holds on $t_0=0$.
If we have
 \begin{equation*}
  \frac{\widetilde{m}^2\widetilde{c}^2\varepsilon }{2(\varepsilon+2)}\Re(u_0,u_1)>E(t_0),
 \end{equation*}
then \eqref{Thm-B-1000} holds.
So that, in this paper, we solve Case ${\rm{I}}$, Case ${\rmtwo}$ and Case ${\rmthree}$ in Table \ref{table}, below.

\end{remark}

In \cite{McCollum-Mwamba-Oliver-2024-NA},
a sufficient condition for blowing-up solutions of \eqref{Cauchy} was shown under $I(u_0)<-n\dot aa^{-1}\widetilde{m}\widetilde{c}\Re(u_0,u_1)<0$ and $\frac{\widetilde{m}^2\widetilde{c}^2\vep}{3(\vep+2)}\Re(u_0,u_1)>E(t_0)\ge0$ with $\widetilde{m},\, \widetilde{c}>0$. Since we have shown blowing-up solutions of \eqref{Cauchy} under $I(u_0)<0$ and $\frac{\widetilde{m}^2\widetilde{c}^2\vep}{2(\vep+2)}\Re(u_0,u_1)>E(t_0)\ge0$ with $\widetilde{m},\, \widetilde{c}>0$ in Theorem \ref{Thm-blow-B}, we improve the result in \cite{McCollum-Mwamba-Oliver-2024-NA} in terms of the upper bounds of $I(u_0)$ and $E(t_0)$. Moreover, we also have shown the blowing-up solution of \eqref{Cauchy} under $I(u_0)<0$ and $\frac{\widetilde{m}^2\widetilde{c}^2\vep}{2(\vep+2)}\|u_0\|>E(t_0)\ge0$. Blowup of \eqref{Cauchy} in this condition was not shown in \cite{McCollum-Mwamba-Oliver-2024-NA}.


We adapt the concavity method which was first introduced by Levine in \cite{Levine-1974-TransAMS, Levine-MA-1974}.
In the Minkowski spacetime, this method was employed for the function $m^2\|u\|^2$ in \cite{Levine-1974-TransAMS, Levine-MA-1974, Y. Wang-2008-PAMS, Yang-Xu-2018-ApplMathLetters}. McCollum, Mwamba and Oliver in \cite{McCollum-Mwamba-Oliver-2024-NA} also employed this method for $m^2\|u\|^2$ in FLRW spacetimes. Instead of the function $m^2\|u\|^2$, we employ the concavity method for a new function $\theta$ given by 
 \[
  \theta(t):=
   \|u\| ^2+\int_{t_0}^t n\left\{\dot a(\tau)a(\tau)^{-1}\|u\| ^2+G(\tau)\right\}d\tau
   +n(T-t)\dot a(t_0)a(t_0)^{-1}\|u_0\| ^2
 \]
for $t\in[t_0,T)$ with some $T\in(t_0,T_0]$, where
 \[
   G(t):=
    \int_{t_0}^t \left\{\frac{\dot a(\tau)^{2}-\ddot a(\tau)a(\tau)}{a(\tau)^{2}}\right\}\|u\| ^2
    d\tau.
 \] 
By the second and third terms of $\theta$,  
we improve the condition on $I(u_0)$, and we solve Case ${\rm{I}}$.
Moreover, we can consider the condition on $m=0$ and $E(t_0)<0$ in Theorem \ref{Thm-blow-1} and Theorem \ref{Thm-blow-B}.
This yields that our results also include blowing-up solutions for semi-linear wave equations in FLRW spacetimes with the negative energy.
Setting the third term of $\theta$ is inspired by \cite{Gazzola-Squassina} and \cite{Xu-Ding-2013-AMS}.
Gazzola and Squassina in \cite{Gazzola-Squassina}, and Xu and Ding in \cite{Xu-Ding-2013-AMS}
showed some conditions for blowing-up solutions of damped wave, and Klein-Gordon equations in the Minkowski spacetime.

Now, we introduce some concrete examples of the scale-function $a$.
For $\sigma\in\mathbb{R}$ and the Hubble constant $H\in \br$, 
we put
\begin{equation}
\label{R-Def-T_0}
T_0:= 
\begin{cases}
\infty & \mbox{if}\ \ (1+\sigma)H\ge0,
\\
-\frac{2}{n(1+\sigma)H} & \mbox{if}\ \ (1+\sigma)H<0,
\end{cases}
\end{equation}
and define $a(\cdot)$ by 
\begin{equation}
\label{Def-a}
a(t):=
\begin{cases}
a_0\left\{1+\frac{n(1+\sigma)Ht}{2}\right\}^{2/n(1+\sigma)} & \mbox{if}\ \ \sigma\ne-1,
\\
a_0\exp(Ht) & \mbox{if}\ \ \sigma=-1
\end{cases}
\end{equation}
for $0\le t<T_0$. 
We note $a_0=a(0)$ and $H=\dot{a}(0)/a(0)$, where $\dot a:=da/dt$.
This scale-function $a(\cdot)$ describes the Minkowski spacetime when $H=0$ 
(namely, $a(\cdot)$ is a constant $a_0$),
the expanding space when $H>0$ with $\sigma\ge -1$,
the blowing-up space when $H>0$ with $\sigma< -1$ 
(the ``Big-Rip'' in cosmology), 
the contracting space when $H<0$ with $\sigma\le -1$, 
and the vanishing space when $H<0$ with $\sigma> -1$ 
(the ``Big-Crunch'' in cosmology).
It describes the de Sitter spacetime when $\sigma=-1$ 
(see, e.g., \cite{Nakamura-2020-OsakaJMath}).

We obtain the following corollaries from the above theorems, respectively for the concrete example of $a$ given by \eqref{Def-a}.

%
%

\begin{corollary}
 \label{Cor-blowing-up}
Let $f\in C(\bc,\bc)$ be a function with \eqref{f-condition} for some $\vep>0$.
Assume that $t_0=0$, $m\in\br$, and $u_0\in H^1(\brn)$, $u_1\in L^2(\brn)$ satisfy
\eqref{Thm-A-1000} and $\Re(u_0,u_1)\ge0$.
Let $T_0$ and $a$ be defined by \eqref{R-Def-T_0} and \eqref{Def-a}.
Assume one of the following conditions (i) and (ii) holds.

(i)
$H=0$, $\sigma\in\br$.

(ii)
$H>0$, $\sigma\ge-1$.
\\
Then, the condition on \eqref{Thm-A-4000} is satisfied, and the result in Theorem \ref{Thm-blow-1} hold.
Namely, the solution $u$ of the Cauchy problem \eqref{Cauchy} blows-up in finite time no later than $T$ defined by \eqref{Estimate-T_1}.
\end{corollary}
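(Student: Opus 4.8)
The plan is to verify that the explicit scale-function $a$ given by \eqref{Def-a} satisfies each hypothesis imposed on $a$ in Theorem \ref{Thm-blow-1}, after which the conclusion follows immediately by invoking that theorem. The hypotheses to be checked are that $a\in C^2([0,T_0),(0,\infty))$, that $\dot a(t)\ge0$, and the convexity-type inequality \eqref{Thm-A-4000}, namely $\dot a(t)^2-\ddot a(t)a(t)\ge0$, for every $t\in[0,T_0)$. All remaining assumptions of Theorem \ref{Thm-blow-1} (namely $t_0=0$, the positivity condition \eqref{Thm-A-1000}, and $\Re(u_0,u_1)\ge0$) are granted directly in the statement of the corollary.

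First I would observe that under either (i) $H=0$ or (ii) $H>0$, $\sigma\ge-1$, we have $(1+\sigma)H\ge0$, so $T_0=\infty$ by \eqref{R-Def-T_0}. Consequently the proviso $T\le T_0$ in Theorem \ref{Thm-blow-1} reduces to $T<\infty$, which is automatic from \eqref{Estimate-T_1}, so the blow-up conclusion will hold unconditionally. Smoothness and positivity are then clear: in the exponential case $\sigma=-1$ we have $a(t)=a_0 e^{Ht}>0$, while for $\sigma\ne-1$, writing $k:=n(1+\sigma)/2$, the base $1+kHt$ stays strictly positive on $[0,\infty)$ because $kH\ge0$, so $a(t)=a_0(1+kHt)^{1/k}$ is smooth and strictly positive there.

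The core computation is the sign identity for \eqref{Thm-A-4000}. For $\sigma=-1$ a direct differentiation gives $\dot a=a_0He^{Ht}$ and $\ddot a=a_0H^2e^{Ht}$, so $\dot a^2-\ddot a\,a\equiv0$, and $\dot a\ge0$ whenever $H\ge0$. For $\sigma\ne-1$, I would differentiate $a(t)=a_0(1+kHt)^{1/k}$ twice to obtain $\dot a=a_0H(1+kHt)^{1/k-1}$ and $\ddot a=a_0H^2(1-k)(1+kHt)^{1/k-2}$, whence the common factor $(1+kHt)^{2/k-2}$ can be extracted and the bracketed terms collapse to the clean expression
\[
\dot a(t)^2-\ddot a(t)a(t)=k\,a_0^2H^2(1+kHt)^{2/k-2}.
\]
Since $n\ge1$ and $\sigma\ge-1$ force $k\ge0$, and since $1+kHt>0$ on the domain, the right-hand side is nonnegative, so \eqref{Thm-A-4000} holds; the strictly positive factor $(1+kHt)^{1/k-1}$ likewise yields $\dot a\ge0$ for $H\ge0$.

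With all hypotheses of Theorem \ref{Thm-blow-1} thereby verified under (i) and (ii), I would conclude directly that the solution of the Cauchy problem \eqref{Cauchy} blows up in $L^2(\brn)$ in finite time no later than the time $T$ of \eqref{Estimate-T_1}. I expect the only genuine obstacle to be the bookkeeping across the two branches of \eqref{Def-a} together with confirming that the algebraic cancellation in $\dot a^2-\ddot a\,a$ really produces the single factor $k$ with the correct sign; once that identity is in hand, everything else is a routine consequence of the definition of $T_0$ in \eqref{R-Def-T_0}.
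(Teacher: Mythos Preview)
Your proposal is correct and follows essentially the same approach as the paper: both verify the hypotheses $\dot a\ge0$ and \eqref{Thm-A-4000} for the explicit scale-function \eqref{Def-a} by direct differentiation, arriving at the same identity $\dot a^2-\ddot a\,a = \tfrac{n(1+\sigma)}{2}a_0^2H^2(1+kHt)^{2/k-2}$ (your $k=n(1+\sigma)/2$), and then invoke Theorem \ref{Thm-blow-1}. Your additional observation that $T_0=\infty$ under (i) and (ii), rendering the proviso $T\le T_0$ automatic, is a point the paper leaves implicit; note only that in case (i) with $\sigma<-1$ your sign argument relies on $H=0$ (making the expression vanish) rather than on $k\ge0$, which you should state explicitly.
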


%
%
%

\begin{corollary}
 \label{Cor-blowing-up-B}
Let $f\in C(\bc,\bc)$ be a function with \eqref{f-condition} for some $\vep>0$.
Assume that $m\in\br$, and $u_0\in H^1(\brn)$, $u_1\in L^2(\brn)$ satisfy
\eqref{Thm-B-1000}, $I(u_0)<0$ and $\Re(u_0,u_1)\ge0$.
We put 
 \begin{equation}
  \label{Def-C}
  C_\vep:=\frac{2}{|m|c\{\sqrt{\vep(\vep+4)}-\vep\}},
 \end{equation}
for $m\neq0$.
Let $T_0$ and $a$ be defined by \eqref{R-Def-T_0} and \eqref{Def-a}.
Assume one of the following conditions (i), (ii), (iii) and (iv) holds.

(i)
$m\in\br$,
$H=0$, $\sigma\in\br$, $t_0=0$.

(ii)
$m=0$, $H>0$, $\sigma\ge-1$, $t_0=0$.

(iii)
$m\neq0$,
$0<H\le 1/nC_\vep$, $\sigma\ge-1$, $t_0=0$.

(iv)
$m\neq0$,
$H>1/nC_\vep$, $\sigma>-1$, $t_0=2C_\vep/(1+\sigma)-2/n(1+\sigma)H(>0)$
\\
Then, the conditions \eqref{Thm-A-4000} and \eqref{t_0-condition} are satisfied, and the result in Theorem \ref{Thm-blow-B} hold.
Namely, the solution $u$ of the Cauchy problem \eqref{Cauchy} blows-up in finite time no later than $T$ defined by \eqref{Estimate-T_delta}.

\end{corollary}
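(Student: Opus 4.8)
The plan is to verify directly that the explicit scale-function $a$ defined by \eqref{Def-a} satisfies the two structural hypotheses of Theorem \ref{Thm-blow-B}, namely $\dot a\ge0$ together with \eqref{Thm-A-4000} on $[t_0,T_0)$, and the smallness condition \eqref{t_0-condition} on the Hubble rate $\dot a(t_0)/a(t_0)$; once these are in place, the stated blow-up (no later than $T$ from \eqref{Estimate-T_delta}) is immediate from Theorem \ref{Thm-blow-B}. So the corollary reduces to a case-by-case computation with the concrete $a$.

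First I would record the two quantities that govern every hypothesis. For $\sigma\ne-1$, writing $b(t):=1+\frac{n(1+\sigma)H}{2}t$ so that $a=a_0 b^{2/n(1+\sigma)}$, a direct differentiation (using $\dot b=H/p$ with $p:=2/n(1+\sigma)$) gives
\[
\frac{\dot a(t)}{a(t)}=\frac{H}{b(t)},\qquad
\dot a(t)^2-\ddot a(t)a(t)=\frac{n(1+\sigma)}{2}\,a_0^2H^2\,b(t)^{\frac{4}{n(1+\sigma)}-2};
\]
for $\sigma=-1$ one has $a=a_0e^{Ht}$, hence $\dot a/a\equiv H$ and $\dot a^2-\ddot a a\equiv0$, and for $H=0$ one has $a\equiv a_0$ with $\dot a\equiv0$ and $\dot a^2-\ddot a a\equiv0$. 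Under the standing hypotheses $\sigma\ge-1$, $H\ge0$ (so $(1+\sigma)H\ge0$ and, by \eqref{R-Def-T_0}, $T_0=\infty$ with $b(t)\ge1>0$) these formulas show at once that $\dot a\ge0$ and that \eqref{Thm-A-4000} holds, the latter because the prefactor $\frac{n(1+\sigma)}{2}$ is nonnegative. This disposes of the structural hypotheses in all four cases (i)--(iv).

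It remains to check \eqref{t_0-condition}, where only $m\ne0$ is nontrivial (for $m=0$ the condition reads $\dot a(t_0)/a(t_0)<\infty$, covering case (ii) and the $m=0$ instance of (i)). The key observation is that $\dot a(t)/a(t)=H/b(t)$ is non-increasing in $t$ since $b$ is non-decreasing, and that the right-hand bound in \eqref{t_0-condition} equals exactly $1/(nC_\vep)$ for $C_\vep$ in \eqref{Def-C}. In cases (i) and (iii) one evaluates at $t_0=0$, where $\dot a(0)/a(0)$ equals $0$ and $H$ respectively, both $\le1/(nC_\vep)$ by hypothesis. The genuinely interesting case is (iv): here $H>1/(nC_\vep)$, so the bound fails at $t=0$, but the dilution of the Hubble rate under expansion ($\sigma>-1$) lets one start later. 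Solving $H/b(t_0)\le1/(nC_\vep)$ for $t_0$ gives precisely $t_0\ge\frac{2C_\vep}{1+\sigma}-\frac{2}{n(1+\sigma)H}$, so the value prescribed in (iv) realizes the equality $\dot a(t_0)/a(t_0)=1/(nC_\vep)$.

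The only delicate point is thus (iv): one must confirm that this $t_0$ is admissible, i.e.\ positive (from $nC_\vep H>1$, which is the assumption $H>1/(nC_\vep)$) and below $T_0=\infty$ (from $(1+\sigma)H>0$). That verification is just the inversion of the monotone map $t\mapsto H/b(t)$, and it explains the exact algebraic form of $t_0$ stated in the corollary. With $\dot a\ge0$, \eqref{Thm-A-4000}, and \eqref{t_0-condition} all established on $[t_0,T_0)$, Theorem \ref{Thm-blow-B} applies verbatim and yields blow-up of $\|u\|$ in finite time no later than $T$ given by \eqref{Estimate-T_delta}, which is the claim.
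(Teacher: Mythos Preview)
Your proof is correct and follows essentially the same approach as the paper: both compute $\dot a/a=H/b(t)$ and $\dot a^2-\ddot a\,a$ explicitly from \eqref{Def-a}, verify $\dot a\ge0$ and \eqref{Thm-A-4000} under the stated sign conditions on $H$ and $\sigma$, and then check \eqref{t_0-condition} case by case, with the key step in (iv) being the inversion of $H/b(t_0)=1/(nC_\vep)$ to obtain the prescribed $t_0$. The only cosmetic difference is that the paper proves the structural hypotheses once in the proof of Corollary~\ref{Cor-blowing-up} and cites that, whereas you redo the computation inline.
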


In Corollaries \ref{Cor-blowing-up} and \ref{Cor-blowing-up-B}, 
the case $H=0$ reduces to the semi-linear Klein-Gordon equation in the Minkowski spacetime, 
which was extensively studied 
(see, e.g., 
\cite{Ball-1978-AcademicPress, Levine-1974-TransAMS, Y. Wang-2008-PAMS, Xu-Ding-2013-AMS, Yang-Xu-2018-ApplMathLetters, ZhangJian-2002-NA} 
on blowing-up solutions, and the references therein). 
We included this case to compare it with the case $H\neq0$.

This paper is organized as follows.
In Section \ref{Sec-Pre}, we collect fundamental properties on some ordinary differential equation of second order, the variant space $I(u)<0$, the energy $E$ and the concavity function.
In Section \ref{Sec-Thm-blow-1}, \ref{Sec-Thm-blow-B}, \ref{Sec-Cor-blowing-up} and \ref{Sec-Cor-blowing-up-B}, we prove Theorem \ref{Thm-blow-1}, Theorem \ref{Thm-blow-B}, Corollary \ref{Cor-blowing-up} and Corollary \ref{Cor-blowing-up-B}.

%
%

\newsection{Preliminaries}
 \label{Sec-Pre}
We prepare several lemmas to prove the results in the previous section.
We start with the following fundamental statement for $E(t)$ and $I(u)$ defined by \eqref{Def-Energy} and \eqref{Def-I}, respectively.

\begin{lemma}
 \label{Lem-E-I}
Let $T>0$, $a_0>0$, $\dot a\ge0$, $u_0\in H^1(\brn)$ and $u_1\in L^2(\brn)$.
If $u$ is the solutions to the Cauchy problem \eqref{Cauchy}, then the following results hold for any $t\in[t_0,T)$.

(1)
 \beq
  \label{Rel-E-I}
  E(t)\ge\frac12 \|u_t\|^2+\frac{I(u)}{\varepsilon+2}+\frac{\varepsilon}{2(\varepsilon+2)}
         \left(c^2a^{-2}\|\nabla u\|^2+m^2c^2\|u\|^2\right).
 \eeq

(2)
 \beq
  \label{eq-Energy}
  E(t)+
  \int_{t_0}^t
   \left(n\dot aa^{-1}\|u_t\| ^2+c^2\dot aa^{-3}\|\nabla u\| ^2\right)
  d\tau
  =E(t_0)
 \eeq
and $E(t)\le E(t_0)$.

(3)
\beq
   \label{eq-I}
 \Re (u,u_{tt})+n\dot aa^{-1} \Re (u,u_t)+I(u)=0.
 \eeq

\end{lemma}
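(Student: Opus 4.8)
The plan is to treat the three assertions essentially independently, since each reduces to pairing the equation \eqref{Cauchy} with an appropriate multiplier in $L^2(\brn)$ and then invoking the structural hypotheses \eqref{f-condition} on $f$. I would argue at the a priori level, using the regularity $u\in C([t_0,T),H^1(\brn))\cap C^1([t_0,T),L^2(\brn))$ together with the equation to justify differentiating under the integral sign and integrating by parts in $x$ with no boundary contribution. Throughout, the only care needed is the bookkeeping of complex conjugates via the symmetry $(v,w)=\overline{(w,v)}$, so that $\Re(v,w)=\Re(w,v)$.

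For (3) I would pair \eqref{Cauchy} with $u$, that is, multiply by $\overline u$, integrate over $\brn$, and take real parts. The term $-c^2a^{-2}\Re(\Delta u,u)$ becomes $+c^2a^{-2}\|\nabla u\|^2$ after one integration by parts, the mass term yields $m^2c^2\|u\|^2$, and the right-hand side yields $c^2\Re\int_\brn\overline u f(u)\,dx$. Collecting precisely the three terms in the definition \eqref{Def-I} of $I(u)$ and moving them to the left produces \eqref{eq-I}, using $\Re(u,u_{tt})=\Re(u_{tt},u)$ and $\Re(u,u_t)=\Re(u_t,u)$.

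For (2) I would differentiate $E(t)$ from \eqref{Def-Energy} in $t$. The kinetic term contributes $\Re(u_{tt},u_t)$; the gradient term contributes $-c^2\dot a a^{-3}\|\nabla u\|^2$ from the explicit time dependence of $a^{-2}$, plus $-c^2a^{-2}\Re(\Delta u,u_t)$ after integrating $\Re(\nabla u_t,\nabla u)$ by parts; the mass term contributes $m^2c^2\Re(u,u_t)$; and the nonlinear term contributes $-c^2\Re\int_\brn f(u)\overline{u_t}\,dx$ by the first identity in \eqref{f-condition}. Grouping everything except the explicit $-c^2\dot a a^{-3}\|\nabla u\|^2$ as $\Re\int_\brn\overline{u_t}\,(u_{tt}-c^2a^{-2}\Delta u+m^2c^2u-c^2f(u))\,dx$ and substituting \eqref{Cauchy}, this bracket equals $-n\dot a a^{-1}u_t$, so $\dot E(t)=-n\dot a a^{-1}\|u_t\|^2-c^2\dot a a^{-3}\|\nabla u\|^2$. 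Integrating from $t_0$ to $t$ gives \eqref{eq-Energy}, and since $\dot a\ge0$ and $a>0$ the integrand is nonnegative, whence $E(t)\le E(t_0)$.

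Assertion (1) is purely algebraic and uses only \eqref{Def-Energy}, \eqref{Def-I} and the pointwise inequality in \eqref{f-condition}. Writing $P:=c^2a^{-2}\|\nabla u\|^2+m^2c^2\|u\|^2$, one has $E=\tfrac12\|u_t\|^2+\tfrac12 P-c^2\int_\brn F(u)\,dx$ and $I(u)=P-c^2\Re\int_\brn\overline u f(u)\,dx$. Substituting into the right-hand side of \eqref{Rel-E-I}, the coefficients of $P$ combine as $\tfrac1{\varepsilon+2}+\tfrac{\varepsilon}{2(\varepsilon+2)}=\tfrac12$, so the claimed inequality collapses to $c^2\Re\int_\brn\overline u f(u)\,dx\ge(\varepsilon+2)c^2\int_\brn F(u)\,dx$, which is exactly the space integral of $\Re\{f(u)\overline u\}\ge(2+\varepsilon)F(u)$. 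I expect no genuine obstacle; the only delicate point is the exact cancellation in (2), where the time derivative of $a^{-2}$ must line up with the damping term produced by substituting the equation.
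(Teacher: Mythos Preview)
Your proposal is correct and follows essentially the same approach as the paper: part (3) by pairing the equation with $\overline u$, part (2) by the $\overline{u_t}$ multiplier (you phrase it as differentiating $E$ and substituting the equation, which is the same computation), and part (1) by the algebraic reduction to the pointwise inequality in \eqref{f-condition}. The paper's write-up is terser but the content is identical.
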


\begin{proof}

(1)
By \eqref{f-condition}, \eqref{Def-Energy} and \eqref{Def-I}, we have 
\begin{align*}
\frac{c^2a^{-2}\|\nabla u\|^2+m^2c^2\|u\|^2-I(u)}{\vep+2}
&=
\frac{c^2}{\vep+2}\Re \int_\brn \overline{u}f(u)dx  \\
&\ge
c^2\int_\brn F(u)dx \\
&=
\frac12\|u_t\| ^2+\frac12c^2a^{-2}\|\nabla u\| ^2
  +\frac12m^2c^2\|u\| ^2-E(t).
\end{align*}
This yields the inequality of \eqref{Rel-E-I}.

(2)
Multiplying $\overline{u_t}$ to the first equation of \eqref{Cauchy},
integrating it for $x\in\brn$, taking its real part 
and integrating it for $[t_0,t]$,
we obtain the equation of \eqref{eq-Energy}.
By $a_0>0$, $\dot a\ge0$ and \eqref{eq-Energy}, we have $E(t)\le E(t_0)$.

(3)
Multiplying $\overline{u}$ to the first equation of \eqref{Cauchy}, integrating it for $x\in\brn$
and taking its real part, we obtain the equation of \eqref{eq-I}.
\end{proof}

%
%
%

Next, we set a lemma with the fundamental statement for the ordinary differential
inequality. 

\begin{lemma}
 \label{Lem-ODI}
Let $T>0$, $\gamma\in C^1([t_0,T),\br)$, 
and $h\in C^1([t_0,T),\br)$. Assume that $h(t_0)\ge0$ and 
 \beq
  \label{Lem-ODI-1000}
  h'(t)+\gamma'(t)h(t)>0
 \eeq
for any $t\in[t_0,T)$. Then, $h(t)>0$ for any $t\in(t_0,T)$.
\end{lemma}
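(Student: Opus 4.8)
The plan is to prove this via an integrating-factor argument, which is the standard trick for converting a differential inequality of the form $h' + \gamma' h > 0$ into a monotonicity statement. First I would introduce the integrating factor $e^{\gamma(t)}$, which is well-defined and $C^1$ since $\gamma \in C^1([t_0,T),\br)$, and crucially is strictly positive everywhere. The key observation is that
\[
  \frac{d}{dt}\left(e^{\gamma(t)} h(t)\right)
  = e^{\gamma(t)}\bigl(h'(t) + \gamma'(t) h(t)\bigr).
\]
By hypothesis \eqref{Lem-ODI-1000}, the bracketed quantity is strictly positive on $[t_0,T)$, and since $e^{\gamma(t)} > 0$, the whole derivative is strictly positive. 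Hence the function $t \mapsto e^{\gamma(t)} h(t)$ is strictly increasing on $[t_0,T)$.

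Next I would integrate this monotonicity from $t_0$ to any $t \in (t_0,T)$. Strict monotonicity gives
\[
  e^{\gamma(t)} h(t) > e^{\gamma(t_0)} h(t_0) \ge 0,
\]
where the last inequality uses the assumption $h(t_0) \ge 0$ together with $e^{\gamma(t_0)} > 0$. Dividing through by $e^{\gamma(t)} > 0$ yields $h(t) > 0$ for every $t \in (t_0,T)$, which is exactly the desired conclusion.

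I do not expect any serious obstacle here; the statement is a clean first-order linear differential inequality and the integrating factor dispatches it immediately. The only points requiring a little care are (a) confirming that $e^{\gamma}$ is genuinely $C^1$ so that the product-rule computation of $\frac{d}{dt}(e^\gamma h)$ is legitimate, which follows from $\gamma, h \in C^1$; and (b) being precise about where the strict versus non-strict inequalities enter. The strict inequality in the conclusion comes entirely from the strict inequality in \eqref{Lem-ODI-1000}: even if $h(t_0) = 0$, the product $e^{\gamma} h$ is strictly increasing, so it becomes strictly positive the instant we move past $t_0$. This is why the hypothesis only needs $h(t_0) \ge 0$ rather than $h(t_0) > 0$, and it explains why the conclusion is stated on the open interval $(t_0,T)$ rather than including the left endpoint.
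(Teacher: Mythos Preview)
Your proof is correct and is essentially identical to the paper's own argument: the paper also multiplies by the integrating factor $e^{\gamma(t)}$, observes that $\bigl(h(t)e^{\gamma(t)}\bigr)'>0$, integrates to obtain $h(t)e^{\gamma(t)}>h(t_0)e^{\gamma(t_0)}\ge 0$, and concludes $h(t)>0$ on $(t_0,T)$.
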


\begin{proof}
Multiplying $e^{\gamma(t)}>0$ to the both sides of \eqref{Lem-ODI-1000},
and noting $(h'+\gamma'h)e^{\gamma}=(he^{\gamma})'$,
we have
 \begin{equation*}
  \left\{h(t)e^{\gamma(t)}\right\}'>0
 \end{equation*}
for any $t\in[t_0,T)$.
Integrating its both sides for $[t_0,t)$, we obtain
 \begin{align*}
  h(t)e^{\gamma(t)}
  &>h(t_0)e^{\gamma(t_0)} \\
  &\ge0
 \end{align*}
for any $t\in[t_0,T)$ by $h(t_0)\ge0$.
This yields $h(t)>0$ for any $t\in(t_0,T)$.
\end{proof}

Here, we put 
 \beq
  \label{Def-L}
  L(t):=\|u(t)\|^2
 \eeq
for $t\in[t_0,T)$.
We prove that the set $\mathcal{B}$ is invariant under our assumptions.

%
%
%

\begin{lemma}
  \label{B-invariant}
Let $0\le t_0<T$, $a(t_0)>0$ and $\dot a\ge0$.
Let $u_0\in H^1(\brn)$, $u_1\in L^2(\brn)$ satisfy $\Re(u_0,u_1)\ge0$ and 
\eqref{Thm-A-1000}.
Then, $I(u_0)<0$, and 
the solution $u$ of \eqref{Cauchy} belongs to $\mathcal{B}$.
\end{lemma}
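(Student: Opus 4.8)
The plan is to establish the two assertions in turn: first that $I(u_0)<0$, using the energy--Nehari relation \eqref{Rel-E-I}, and then to propagate the sign of $I(u)$ forward in time by a continuity-and-contradiction argument driven by Lemma \ref{Lem-ODI}.

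For the initial claim, I would evaluate \eqref{Rel-E-I} at $t=t_0$ and rearrange. Since the kinetic term $\tfrac12\|u_1\|^2$ and the gradient contribution $\tfrac{\varepsilon}{2(\varepsilon+2)}c^2a^{-2}\|\nabla u_0\|^2$ are nonnegative, discarding them yields
\begin{equation*}
\frac{I(u_0)}{\varepsilon+2}\le E(t_0)-\frac{\varepsilon m^2c^2}{2(\varepsilon+2)}\|u_0\|^2 .
\end{equation*}
The hypothesis \eqref{Thm-A-1000} is exactly $\rho>0$, i.e. $E(t_0)<\tfrac{\varepsilon m^2c^2}{2(\varepsilon+2)}\|u_0\|^2$, so the right-hand side is strictly negative and $I(u_0)<0$ follows immediately.

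For the invariance I would argue by contradiction, using the continuity of $t\mapsto I(u(t))$ along the solution class in \eqref{Def-B}. Since $I(u_0)<0$, if $u\notin\mathcal B$ there is a \emph{first} time $t_1\in(t_0,T)$ with $I(u(t_1))=0$ and $I(u(t))<0$ on $[t_0,t_1)$. The idea is to bound $L(t_1)=\|u(t_1)\|^2$ from above and below and obtain a contradiction. For the upper bound I evaluate \eqref{Rel-E-I} at $t_1$ with $I(u(t_1))=0$ and drop the nonnegative kinetic and gradient terms to get $E(t_1)\ge\tfrac{\varepsilon m^2c^2}{2(\varepsilon+2)}\|u(t_1)\|^2$; combining with $E(t_1)\le E(t_0)$ from \eqref{eq-Energy} and with $\rho>0$ gives, when $m\neq0$, the strict bound $\|u(t_1)\|^2<\|u_0\|^2$ (and when $m=0$ the chain collapses to $0\le E(t_0)<0$, already a contradiction). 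For the lower bound I would differentiate \eqref{Def-L} twice and use \eqref{eq-I} to replace $\Re(u,u_{tt})$, obtaining
\begin{equation*}
L''(t)+\frac{n\dot a}{a}L'(t)=2\|u_t\|^2-2I(u),
\end{equation*}
whose right-hand side is strictly positive on $[t_0,t_1)$ because $I(u)<0$ there. Applying Lemma \ref{Lem-ODI} with $h=L'$ and $\gamma'=n\dot a a^{-1}$ (so $\gamma\in C^1$, as $a\in C^2$ and $a>0$), together with $h(t_0)=L'(t_0)=2\Re(u_0,u_1)\ge0$, gives $L'(t)>0$ on $(t_0,t_1)$, hence $\|u(t_1)\|^2=L(t_1)>L(t_0)=\|u_0\|^2$. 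This contradicts the upper bound, so no such $t_1$ exists and $I(u(t))<0$ on $[t_0,T)$, i.e. $u\in\mathcal B$.

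The step I expect to be the main obstacle is the bookkeeping that makes the upper and lower bounds on $\|u(t_1)\|^2$ genuinely incompatible: one must keep the time-dependent coefficient on $c^2a^{-2}\|\nabla u\|^2$, discard exactly the nonnegative pieces, and treat the degenerate case $m=0$ separately, where the contradiction comes directly from $E(t_0)<0$ rather than from a norm comparison. A secondary point to justify is the existence of the first crossing time $t_1$, which tacitly relies on the continuity of $I(u(\cdot))$, and hence on the regularity of $u$ together with the continuity of $a$ and $f$.
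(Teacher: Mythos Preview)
Your argument is correct and follows essentially the same route as the paper: the initial inequality $I(u_0)<0$ comes from \eqref{Rel-E-I} at $t_0$, and the invariance is proved by a first-crossing contradiction combining the identity $L''+n\dot a a^{-1}L'=2(\|u_t\|^2-I(u))$ with Lemma~\ref{Lem-ODI} to force $L(t_1)>L(t_0)$, while \eqref{Rel-E-I} at $t_1$ together with $E(t_1)\le E(t_0)$ and \eqref{Thm-A-1000} force $L(t_1)<L(t_0)$. The only cosmetic difference is that the paper disposes of the case $m=0$ up front (directly from $E(t)\le E(t_0)<0$ and $E(t)\ge I(u)/(\varepsilon+2)$) rather than inside the contradiction, but your embedded treatment is equally valid.
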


\begin{proof}
 Firstly, we prove $I(u_0)<0$.
 By \eqref{Thm-A-1000} and \eqref{Rel-E-I}, we have
 \begin{equation*}
  \frac12\|u_1\|^2+\frac{I(u_0)}{\varepsilon+2}
 +\frac{\varepsilon}{2(\varepsilon+2)}c^2a(t_0)^{-2}\|\nabla u_0\|^2
  <0.
 \end{equation*}
 This yields $I(u_0)<0$.

 Next, we prove $I(u(t))<0$ for any $t>t_0$.
 When $m=0$, we obtain $E(t_0)<0$ by \eqref{Thm-A-1000}.
 Since we have $E(t)\le E(t_0)$ by \eqref{eq-Energy} and $\dot a\ge0$, we obtain $E(t)<0$.
 Since we have $E(t)\ge I(u)/(\varepsilon+2)$ by \eqref{Rel-E-I}, we obtain $I(u)<0$.
 Next, we consider the case of $m\neq0$.
 If  $I(u(t))<0$ for any $t\in[t_0,T)$ does not hold, then there exists a first $t_*\in(t_0,T)$ 
 such  that $I(u(t_*))=0$ by $I(u_0)<0$ and the continuity of $I(u)$ on $[t_0,T)$.
 Here, we have
  \beq
   \label{Proof-Lem-Bi-1000}
   L'=2\Re(u,u_t), \ \ L'(t_0)=2\Re(u_0,u_1)\ge0
  \eeq
 and
  \beq
   \label{Proof-Lem-Bi-2000}
   L''
   =2\|u_t\| ^2+2\Re(u,u_{tt}),
  \eeq
 where $L$ has been defined by \eqref{Def-L}.
 By \eqref{eq-I}, \eqref{Proof-Lem-Bi-1000} and \eqref{Proof-Lem-Bi-2000},
 we obtain
  \begin{align*}
   L''
    &=2\left\{\|u_t\| ^2-I(u)-n\dot aa^{-1}\Re(u,u_t)\right\} \\
    &=2\left\{\|u_t\| ^2-I(u)\right\}-n\dot aa^{-1} L'.
  \end{align*}
 This yields
  \begin{equation}
    \label{Proof-Lem-Bi-3000}
   L''+n\dot aa^{-1} L'=2\left\{\|u_t\| ^2-I(u)\right\}.
  \end{equation}
 Noting that $I(u)<0$ for $t\in[t_0,t_*)$, we have 
  \begin{equation*}
   L''+n\dot aa^{-1}L'>0
  \end{equation*}
 for $t\in[t_0,t_*)$.
 Replacing $T$, $\gamma(t)$, $h(t)$ in Lemma \ref{Lem-ODI} with $t_*$, $n\log{a(t)}$, $L'(t)$,
 we obtain $L'(t)>0$ for any $t\in(t_0,t_*)$ by $L'(t_0)\ge0$ in \eqref{Proof-Lem-Bi-1000}. Thus, this yields $L(t_*)>L(t_0)$
 since $L$ is continuous on $[t_0,t_*]$. By $L(t_*)>L(t_0)$ and \eqref{Thm-A-1000}, we have 
  \begin{equation}
   \label{Proof-Lem-Bi-4000}
   L(t_*)>\frac{2(\varepsilon+2)}{m^2c^2\varepsilon }E(t_0).
  \end{equation} 
 In contrast, we have 
  \begin{align*}
   E(t_*)
    &\ge \frac12\|u_t(t_*)\|^2+\frac{\varepsilon}{2(\varepsilon+2)}
        \left(c^2a(t_*)^{-2}\|\nabla u(t_*)\|^2+m^2c^2\|u(t_*)\|^2\right) \\
    &\ge \frac{m^2c^2\varepsilon }{2(\varepsilon+2)}L(t_*)
  \end{align*}
 by \eqref{Rel-E-I} and $I(u(t_*))=0$.
 Since we have $E(t_0)\ge E(t_*)$ by (2) of Lemma \ref{Lem-E-I}, we have 
  \begin{equation*}
    E(t_0)\ge \frac{m^2c^2\varepsilon }{2(\varepsilon+2)}L(t_*).
  \end{equation*}
 This leads to a contradiction to \eqref{Proof-Lem-Bi-4000}.
 Thus, we have proved $I(u(t))<0$ for any $t\in[t_0,T)$.
 This yields $u\in\mathcal{B}$.
\end{proof}

%
%
%

\begin{lemma}
 \label{B-invariant-IP}
Let $a>0$, $\dot a\ge0$ and \eqref{Thm-A-4000}. Assume that there exists $t_0\in[0,T)$ such that \eqref{t_0-condition} holds.
Assume that $u_0\in H^1(\brn)$, $u_1\in L^2(\brn)$ satisfy $I(u_0)<0$ and 
\eqref{Thm-B-1000}.
Then,
the solution $u$ of \eqref{Cauchy} belongs to $\mathcal{B}$.
 
\end{lemma}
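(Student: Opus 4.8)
The plan is to show that $I(u(t))<0$ for every $t\in[t_0,T)$, i.e. $u\in\mathcal{B}$, following the scheme of the proof of Lemma \ref{B-invariant}, but now with $I(u_0)<0$ already granted as a hypothesis and with the initial-energy bound replaced by the initial-momentum bound \eqref{Thm-B-1000}. First I would dispose of the case $m=0$: there \eqref{Thm-B-1000} reads $E(t_0)<0$, so by the energy identity \eqref{eq-Energy} of Lemma \ref{Lem-E-I} one has $E(t)\le E(t_0)<0$, and since \eqref{Rel-E-I} gives $I(u)\le(\vep+2)E(t)$, this already yields $I(u)<0$ on $[t_0,T)$.

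For $m\neq0$ I would argue by contradiction exactly as in Lemma \ref{B-invariant}: if $I$ does not stay negative, let $t_*\in(t_0,T)$ be the first time with $I(u(t_*))=0$, so $I(u)<0$ on $[t_0,t_*)$. On that interval \eqref{Proof-Lem-Bi-3000} gives $L''+n\dot a a^{-1}L'=2\{\|u_t\|^2-I(u)\}>0$, and Lemma \ref{Lem-ODI} with $\gamma=n\log a$ and $L'(t_0)=2\Re(u_0,u_1)\ge0$ yields $L'>0$ on $(t_0,t_*)$. Evaluating \eqref{Rel-E-I} at $t_*$ with $I(u(t_*))=0$ and using $E(t_*)\le E(t_0)$ gives
\[
 E(t_0)\ge\tfrac12\|u_t(t_*)\|^2+\frac{m^2c^2\vep}{2(\vep+2)}L(t_*).
\]
If $E(t_0)<0$ this is absurd, so it remains to treat $E(t_0)\ge0$, in which case \eqref{Thm-B-1000} forces $\Re(u_0,u_1)>0$, i.e. $L'(t_0)>0$.

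The crux, and the step that uses the hypotheses \eqref{Thm-A-4000} and \eqref{t_0-condition}, is to upgrade $L'>0$ to the quantitative bound $L'(t_*)\ge L'(t_0)$. Inserting the lower bound for $-I(u)$ coming from \eqref{Rel-E-I} into \eqref{Proof-Lem-Bi-3000}, discarding the nonnegative gradient term and using $E(t)\le E(t_0)$, I would obtain $L''+n\dot a a^{-1}L'\ge(\vep+4)\|u_t\|^2+\vep m^2c^2 L-2(\vep+2)E(t_0)$; the elementary inequality $(\vep+4)\|u_t\|^2+\vep m^2c^2\|u\|^2\ge|m|c\sqrt{\vep(\vep+4)}\,|L'|$ then gives
\[
 L''+\bigl(n\dot a a^{-1}-|m|c\sqrt{\vep(\vep+4)}\bigr)L'\ge-2(\vep+2)E(t_0).
\]
By \eqref{Thm-A-4000} the quotient $\dot a/a$ is non-increasing, so the coefficient of $L'$ is at most $n\dot a(t_0)a(t_0)^{-1}-|m|c\sqrt{\vep(\vep+4)}=:-\lambda_0$, and \eqref{t_0-condition} makes $\lambda_0\ge|m|c\{\sqrt{\vep(\vep+4)}+\vep\}/2>0$, in particular $|m|c\vep/(2\lambda_0)<1$. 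On $[t_0,t_*)$, where $L'\ge0$, this produces $L''-\lambda_0 L'\ge-2(\vep+2)E(t_0)$; multiplying by $e^{-\lambda_0 t}$, integrating from $t_0$, and bounding $-2(\vep+2)E(t_0)>-\tfrac{|m|c\vep}{2}L'(t_0)$ via \eqref{Thm-B-1000}, I would get $L'(t)>L'(t_0)$ for $t\in(t_0,t_*)$, whence $L'(t_*)\ge L'(t_0)$ by continuity.

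Finally I would close the contradiction. From $L'(t_*)\ge L'(t_0)$ and the Cauchy--Schwarz bound $L'(t_*)=2\Re(u(t_*),u_t(t_*))\le2\|u(t_*)\|\,\|u_t(t_*)\|$ one gets $\|u_t(t_*)\|^2\ge(L'(t_0))^2/(4L(t_*))$; substituting into the displayed energy inequality and applying AM--GM to $\frac{(L'(t_0))^2}{8L(t_*)}+\frac{m^2c^2\vep}{2(\vep+2)}L(t_*)$ yields $E(t_0)>|m|c\sqrt{\vep/(\vep+2)}\,\Re(u_0,u_1)$, which contradicts \eqref{Thm-B-1000} because $\sqrt{\vep/(\vep+2)}>\vep/(2(\vep+2))$. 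The main obstacle is precisely the inequality $L'(t_*)\ge L'(t_0)$: the expansion/damping term $n\dot a a^{-1}L'$ opposes the growth driven by the mass, and the naive use of \eqref{Proof-Lem-Bi-3000} only gives $L'(t)\ge(a(t_0)/a(t))^n L'(t_0)$, which is too weak. It is the balance condition \eqref{t_0-condition}, forcing the mass term to dominate the damping, together with the monotonicity of $\dot a/a$ from \eqref{Thm-A-4000}, that makes $L'$ non-decreasing and drives the argument through.
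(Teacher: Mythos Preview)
Your proof is correct and follows essentially the same strategy as the paper: derive from \eqref{Proof-Lem-Bi-3000} and \eqref{Rel-E-I} a differential inequality for $L'$ in which the damping is dominated by the mass term (via \eqref{t_0-condition} and the monotonicity of $\dot a/a$ from \eqref{Thm-A-4000}), conclude $L'(t)\ge L'(t_0)$, and contradict the energy bound at $t_*$. The paper packages the key step more cleanly by setting $H(t):=L'(t)-\tfrac{4(\vep+2)}{|m|c\vep}E(t_0)$ and showing $H'\ge\tfrac{|m|c\vep}{2}H$, which gives $L''>0$ on all of $[t_0,T)$ (not just on $[t_0,t_*)$); this global inequality \eqref{Proof-Lem-BiI-1.5} is actually invoked later in the proof of Theorem~\ref{Thm-blow-B}, so the paper's formulation buys a reusable estimate that your version, restricted to $[t_0,t_*)$, does not directly supply.
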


\begin{proof}
 When $m=0$, the proof is similar to Lemma \ref{B-invariant}.
 We consider the case of $m\neq0$ in the following. Putting
  \begin{equation*}
   H(t):=L'(t)-\frac{4(\vep+2)}{|m|c\vep}E(t_0)=2\Re(u,u_t)-\frac{4(\vep+2)}{|m|c\vep}E(t_0),
  \end{equation*}
 we have
  \begin{align}
   \label{Proof-Lem-BiI-1}
   H'&=L'' \nonumber \\
      &= 2\left\{\|u_t\| ^2-I(u)\right\}-n\dot aa^{-1} L' \nonumber\\
      &\ge \left(\vep+4\right)\|u_t\|^2+\vep m^2c^2\|u\|^2-2(\vep+2)E(t)
              -n\dot aa^{-1}L'  \nonumber\\
      &\ge 2|m|c\sqrt{\vep(\vep+4)}\|u\|\|u_t\|-2(\vep+2)E(t_0)-n\dot aa^{-1}L' 
         \nonumber\\
      &\ge |m|c\sqrt{\vep(\vep+4)}|\Re(u,u_t)|-2(\vep+2)E(t_0)-n\dot aa^{-1}L' 
         \nonumber\\
      &= \left\{|m|c\sqrt{\vep(\vep+4)}-2n\dot aa^{-1}\right\}|\Re(u,u_t)|-2(\vep+2)E(t_0)
         \nonumber\\
      &\ge |m|c\vep\Re(u,u_t)-2(\vep+2)E(t_0) 
         \nonumber\\
      &=\frac{|m|c\vep}2H
  \end{align}
 for any $t\in[t_0,T)$,
 by \eqref{Proof-Lem-Bi-3000}, \eqref{Rel-E-I}, $x^2+y^2\ge2xy$ for any $x, y\in\br$
 and $E(t)\le E(t_0)$ for any $t\in[t_0,T)$, where we have used
 $|m|c\sqrt{\vep(\vep+4)}-2n\dot aa^{-1}\ge |m|c\vep$ by \eqref{t_0-condition} and 
 $\frac{d}{dt}\dot aa^{-1}\le0$ derived from \eqref{Thm-A-4000}.
 Since we have $H(t_0)>0$ by \eqref{Thm-B-1000}, we obtain $H(t)>0$ 
 for any $t\in[t_0,T)$ by putting $h:=H$ and $\gamma:=-|m|c\vep t/2$, in
 Lemma \ref{Lem-ODI}. This yields $L''>0$ for any $t\in[t_0,T)$. Thus, we have
  \begin{align}
    \label{Proof-Lem-BiI-1.5}
   \Re(u,u_t)=\frac{L'(t)}2
                &>\frac{L'(t_0)}2=\Re(u_0,u_1)  \\ \label{Proof-Lem-BiI-2}
                &>\frac{2(\vep+2)}{|m|c\vep}E(t_0) 
  \end{align}
 for any $t\in[t_0,T)$ by $L''>0$ and \eqref{Thm-B-1000}.
 Here, if  $I(u(t))<0$ for any $t\in[0,T)$ does not hold, 
 then there exists a first $t_*\in(t_0,T)$ such  that $I(u(t_*))=0$ by $I(u_0)<0$ 
 and the continuity of $I(u)$ on $[t_0,T)$.
 Then, we have
  \begin{align*}
   E(t_0)&\ge E(t_*) \nonumber \\
          &\ge \frac12\|u_t(t_*)\|^2
                 +\frac{\vep}{2(\vep+2)}(c^2a(t_*)^{-2}\|\nabla u(t_*)\|+m^2c^2\|u(t_*)\|^2)
               \nonumber \\
           &\ge \frac{\vep}{2(\vep+2)}\|u_t(t_*)\|^2+\frac{m^2c^2\vep}{2(\vep+2)}\|u(t_*)\|^2
               \nonumber \\
           &\ge \frac{|m|c\vep}{2(\vep+2)}\|u(t_*)\|\|u_t(t_*)\| \nonumber \\
           &\ge \frac{|m|c\vep}{2(\vep+2)}\Re(u(t_*),u_t(t_*))
  \end{align*}
 by $E(t_0)\ge E(t)$ for any $t\in[t_0,T)$, \eqref{Rel-E-I} and $I(u(t_*))=0$.
 This leads a contradiction to \eqref{Proof-Lem-BiI-2}. Thus, we have proved $I(u(t))<0$
 for any $t\in[t_0,T)$. This yields $u\in\mathcal{B}$. 
\end{proof}

%
%
%

\begin{lemma}
  \label{Lem-Thm-A}
Let $0\le t_0<T$, $a(t_0)>0$ and $\dot a\ge0$. 
Let $u_0\in H^1(\brn)$, $u_1\in L^2(\brn)$ satisfy $\Re(u_0,u_1)\ge0$ 
and \eqref{Thm-A-1000}. Let $u$ be the solution of \eqref{Cauchy}. 
Then, 
 \beq
  \label{Lem-Thm-A-1000}
  L'(t)>0 \ \  \mbox{and} \ \ 
  m^2c^2\vep L(t)-2(\varepsilon+2)E(t_0)>2(\vep+2)\rho
 \eeq
holds for any $t\in(t_0,T)$, where $\rho$ is defined by \eqref{Thm-A-1000}.
\end{lemma}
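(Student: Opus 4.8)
The plan is to reduce both assertions to the second-order differential inequality for $L$ that was already isolated inside the proof of Lemma \ref{B-invariant}, so that the work is essentially ODE bookkeeping once the sign of $I(u)$ is known. First I would invoke Lemma \ref{B-invariant}: under the present hypotheses ($a(t_0)>0$, $\dot a\ge0$, $\Re(u_0,u_1)\ge0$ and \eqref{Thm-A-1000}) that lemma gives $u\in\mathcal{B}$, i.e. $I(u(t))<0$ for every $t\in[t_0,T)$. This is the crucial input, and it is exactly the step in which the positivity of $\rho$ is used (through the energy relation \eqref{Rel-E-I} together with $E(t)\le E(t_0)$); with it in hand the remainder is elementary.

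Next I would recall identity \eqref{Proof-Lem-Bi-3000}, namely $L''+n\dot a a^{-1}L'=2(\|u_t\|^2-I(u))$, which follows from $L'=2\Re(u,u_t)$ and $L''=2\|u_t\|^2+2\Re(u,u_{tt})$ together with \eqref{eq-I}. Since $I(u)<0$ on all of $[t_0,T)$, the right-hand side is strictly positive, so $L''+n\dot a a^{-1}L'>0$ there. I would then apply Lemma \ref{Lem-ODI} with $\gamma(t)=n\log a(t)$ (so $\gamma'(t)=n\dot a a^{-1}$) and $h(t)=L'(t)$, using $h(t_0)=L'(t_0)=2\Re(u_0,u_1)\ge0$, to conclude $L'(t)>0$ for all $t\in(t_0,T)$. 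This is the first claimed inequality, and the argument is insensitive to whether $m=0$, since it uses only the sign of $I(u)$.

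For the second inequality I would use $L'>0$ to say $L$ is strictly increasing, hence $L(t)>L(t_0)=\|u_0\|^2$ for every $t\in(t_0,T)$. Unwinding the definition of $\rho$ in \eqref{Thm-A-1000} gives $2(\vep+2)\rho=m^2c^2\vep\|u_0\|^2-2(\vep+2)E(t_0)$, so the target estimate $m^2c^2\vep L(t)-2(\vep+2)E(t_0)>2(\vep+2)\rho$ is equivalent to $m^2c^2\vep\,L(t)>m^2c^2\vep\,\|u_0\|^2$; when $m\ne0$ the factor $m^2c^2\vep$ is strictly positive and the claim follows at once from $L(t)>\|u_0\|^2$.

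I expect no serious analytic obstacle, since the whole content is packaged in Lemmas \ref{B-invariant} and \ref{Lem-ODI}; the one point to watch is the degenerate case $m=0$, where both sides of the second estimate collapse to $-2(\vep+2)E(t_0)$ and the inequality degenerates to an equality, so the strict form is genuinely an $m\ne0$ statement (in the massless case the relevant positivity is $2(\vep+2)\rho=-2(\vep+2)E(t_0)>0$, coming from $\rho=-E(t_0)>0$). The remaining bookkeeping concern is to keep $E(0)$ and $E(t_0)$ consistent—they coincide under the normalization $t_0=0$ used here—and to invoke monotonicity of $L$ only on the open interval where $L'>0$ has actually been established.
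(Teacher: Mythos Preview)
Your proof is correct and matches the paper's approach line for line: Lemma \ref{B-invariant} for $I(u)<0$, identity \eqref{Proof-Lem-Bi-3000} plus Lemma \ref{Lem-ODI} for $L'>0$, then strict monotonicity of $L$ combined with the definition of $\rho$. Your observation about the $m=0$ degeneracy is accurate---the paper's own chain $m^2c^2\vep L(t)>m^2c^2\vep L(t_0)$ has exactly the same issue---but this is harmless for the downstream use in \eqref{A-19}, where only $\zeta(t)\ge 2(\vep+2)\rho$ is needed.
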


\begin{proof}
 By Lemma \ref{B-invariant}, we have $I(u(t))<0$ for any $t\in[t_0,T)$. 
 By \eqref{Proof-Lem-Bi-3000} and $I(u)<0$, we have $L''(t)+n\dot a(t)a(t)^{-1}L'(t)>0$ for  
 any $[t_0,T)$.
 Using Lemma \ref{Lem-ODI}, we obtain $L'(t)>0$ for any $t\in(t_0,T)$ by $L'(t_0)=2\Re (u_0,u_1)\ge0$.
 By $L'>0$ and \eqref{Thm-A-1000}, we have 
  \begin{align*}
   m^2c^2\vep L(t)-2(\vep+2)E(t_0)
    &>m^2c^2\vep L(t_0)-2(\vep+2)E(t_0) \\
    &=2(\vep+2)\rho
  \end{align*}
 for any $t\in(t_0,T)$.
\end{proof}

%
%

In the following, we show the key lemma in order to use the concavity method.
\begin{lemma}
 \label{Lem-concavity}
Let $\kappa$, $A$ and $B$ be positive constants.
Let $0\le t_0<T$ satisfy
 \beq
  \label{Lem-T-condition}
  T > t_0+\frac{\pi^2(2\kappa+1)B}{8\kappa^2 A}.
 \eeq
We consider the following differential inequality ;
 \begin{equation}
  \label{Deq-concavity}
  \left\{
  \begin{array}{l}
    y''(t)\le -\kappa A y(t)^{1+1/\kappa},
   \\
    y(t_0)=y_0, \ \ y'(t_0)=y_1
 \end{array}
 \right.
 \end{equation}
for any $t\in[t_0,T)$.
If $y_0$ and $y_1$ satisfy
 \begin{equation}
  \label{Lem-initial}
  \left\{B(T-t_0)\right\}^{-\kappa}\le y_0, \ \ y_1\le0,
 \end{equation}
then there exists $T_*\in(t_0,T)$ such that 
the solution $y\in C^2([t_0,T),\br)$ of \eqref{Deq-concavity} satisfies
$y\searrow0$ as $t\nearrow T_*$.
\end{lemma}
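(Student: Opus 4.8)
The plan is to treat \eqref{Deq-concavity} as a first-order-in-energy problem and extract a quantitative decay rate for $y$ by separation of variables. First I would record the qualitative shape of the solution: as long as $y(t)>0$ the right-hand side of \eqref{Deq-concavity} is strictly negative, so $y''<0$ there, meaning $y$ is strictly concave and $y'$ is strictly decreasing. Combined with $y'(t_0)=y_1\le0$ from \eqref{Lem-initial}, this forces $y'(t)<0$ for $t>t_0$, so $y$ is strictly decreasing while it remains positive. I would then set $T_*:=\inf\{t\in(t_0,T):y(t)=0\}$ and observe that on $[t_0,T_*)$ one has $0<y(t)\le y_0$ and $y'(t)\le0$. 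The whole task reduces to proving $T_*<T$, since continuity and monotonicity of $y$ then give $y\searrow0$ as $t\nearrow T_*$.

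The core step is a first integral. Multiplying \eqref{Deq-concavity} by $2y'(t)\le0$ reverses the inequality, and since $2y'y''=(y'^2)'$ and $y^{1+1/\kappa}y'=\tfrac{\kappa}{2\kappa+1}(y^{2+1/\kappa})'$, integration from $t_0$ to $t$ yields
\[
y'(t)^2\ge y_1^2+\frac{2\kappa^2A}{2\kappa+1}\left(y_0^{2+1/\kappa}-y(t)^{2+1/\kappa}\right)\ge\frac{2\kappa^2A}{2\kappa+1}\left(y_0^{2+1/\kappa}-y(t)^{2+1/\kappa}\right).
\]
Writing $p:=2+1/\kappa$ and $C:=2\kappa^2A/(2\kappa+1)$ and using $y'\le0$, this reads $-y'(t)\ge\sqrt{C}\,\sqrt{y_0^p-y(t)^p}$ on $(t_0,T_*)$. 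Dividing and integrating, with the substitution $s=y(\tau)$, gives $\int_{y(t)}^{y_0}ds/\sqrt{y_0^p-s^p}\ge\sqrt{C}\,(t-t_0)$.

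Finally I would bound the left-hand integral sharply. The scaling $s=y_0\sigma$ gives $\int_{y(t)}^{y_0}ds/\sqrt{y_0^p-s^p}\le y_0^{1-p/2}\int_0^1 d\sigma/\sqrt{1-\sigma^p}=y_0^{-1/(2\kappa)}\int_0^1 d\sigma/\sqrt{1-\sigma^p}$. Because $p>2$ we have $\sigma^p\le\sigma^2$ on $[0,1]$, hence $\int_0^1 d\sigma/\sqrt{1-\sigma^p}\le\int_0^1 d\sigma/\sqrt{1-\sigma^2}=\pi/2$. Thus $\sqrt{C}\,(t-t_0)\le(\pi/2)\,y_0^{-1/(2\kappa)}$ for every $t\in(t_0,T_*)$, and the lower bound $y_0\ge\{B(T-t_0)\}^{-\kappa}$ in \eqref{Lem-initial} gives $y_0^{-1/(2\kappa)}\le\sqrt{B(T-t_0)}$. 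If $T_*=T$, i.e.\ $y>0$ on all of $[t_0,T)$, then letting $t\nearrow T$ and squaring produces $T-t_0\le\pi^2(2\kappa+1)B/(8\kappa^2A)$, contradicting \eqref{Lem-T-condition}. Hence $T_*<T$, which finishes the argument.

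The main obstacles are the few delicate points rather than the overall scheme: justifying the monotonicity $y'\le0$ that legitimizes multiplying by $y'$ and reversing the inequality, controlling the integrable endpoint singularity of $1/\sqrt{y_0^p-s^p}$ at $s=y_0$, and recognizing that the sharp constant $\pi/2$ emerges from the elementary comparison $1-\sigma^p\ge1-\sigma^2$, valid \emph{precisely} because the exponent $1+1/\kappa>1$ forces $p>2$. These are exactly what pin down the threshold constant appearing in \eqref{Lem-T-condition}.
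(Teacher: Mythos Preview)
Your argument is correct and follows essentially the same route as the paper: both obtain the first integral $(y')^2\ge C\bigl(y_0^{\,p}-y^{\,p}\bigr)$ by multiplying by $y'\le0$, and both exploit the comparison $y^{\,p}\le y_0^{\,p-2}y^{2}$ (equivalently $\sigma^{p}\le\sigma^{2}$ on $[0,1]$) to reduce the separated integral to $\int_0^1 d\sigma/\sqrt{1-\sigma^2}=\pi/2$, yielding exactly the threshold in \eqref{Lem-T-condition}. The only cosmetic difference is that the paper applies this comparison \emph{before} integrating (so the antiderivative is an explicit arcsine) and retains $y_1^2$ until the final step, whereas you drop $y_1^2$ at once and bound the nonlinear integral directly; the constants and conclusion are identical.
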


\begin{proof}
 Assume $y(t)>0$ for any $t\in[t_0,T)$. We show a contradiction.
 We note that we have $y''<0$ by $y>0$ and \eqref{Deq-concavity}, and
 we also have $y'(t)<0$ for any $t\in(t_0,T)$ by $y''<0$ and $y_1\le0$.
 Multiplying $y'<0$ to the both sides of the first inequality in \eqref{Deq-concavity}, 
 we obtain
  \begin{equation*}
   \frac12\left\{(y')^2\right\}'
    \ge
   -\frac{\kappa^2 A}{2\kappa+1}\left\{y^{2+1/\kappa}\right\}'.
  \end{equation*}  
 This yields
  \begin{align}
   \label{  Proof-Lem-C-1000}
   (y')^2
    &\ge
   y_1^2+{\rm{I}}y_0^{2+1/\kappa}-{\rm{I}}y^{2+1/\kappa}  \nonumber\\
    &>
   y_1^2+{\rm{I}}y_0^{2+1/\kappa}-{\rm{I}}y_0^{1/\kappa}y^{2}
  \end{align}
 for $t\in(t_0,T)$ by $0<y<y_0$,
 where we have put 
  \begin{equation*}
   {\rm{I}}:=\frac{2\kappa^2 A}{2\kappa+1}.
  \end{equation*}
 Noting that the right hand side of \eqref{  Proof-Lem-C-1000} is positive by $y_0>y$, 
 we have 
  \begin{equation*}
   y'<-\left({\rmtwo}-{\rmthree}y^2\right)^{1/2}
  \end{equation*}
 by $y'<0$, where we have put
  \[
   {\rmtwo}
    := y_1^2+{\rm{I}}y_0^{2+1/\kappa},
      \ \  
   {\rmthree}
    :={\rm{I}}y_0^{1/\kappa}.
  \]
 This yields
  \begin{equation}
   \label{  Proof-Lem-C-2000}
   \frac{y'}{\left({\rmtwo}-{\rmthree}y^2\right)^{1/2}}<-1.
  \end{equation}
 Putting $z:=({\rmthree}/{\rmtwo})^{1/2}y$ and integrating the both sides of
 \eqref{  Proof-Lem-C-2000} on $[t_0,t]$ with $t_0<t<T$, we have $z>0$ and 
  \begin{equation}
   \label{  Proof-Lem-C-5000000}
   \int_{t_0}^t {\rmthree}^{-1/2}\frac{d}{d\tau}\Sin z(\tau)d\tau<-(t-t_0),
  \end{equation}
 where $\Sin z(\cdot)$ is the inverse to $z(\cdot)=\sin\varphi$ 
 for $\varphi\in[-\pi/2,\pi/2]$.
 This yields
  \begin{equation*}
   {\rmthree}^{1/2}(t-t_0)-\Sin z(t_0)<-\Sin z(t)<0
  \end{equation*}
 by $z>0$ and we have
  \beq
    \label{Proof-Lem-C-4000}
   \left|{\rmthree}^{1/2}(t-t_0)-\Sin z(t_0)\right|<\left|\Sin z(t)\right|<\frac \pi2
  \eeq
 by $\Sin z(\cdot)\in[-\pi/2,\pi/2]$.
 We have
  \[
   \Sin z(t)<-{\rmthree}^{1/2}(t-t_0)+\Sin z(t_0).
  \]
 by \eqref{  Proof-Lem-C-5000000}.
 This yields
  \begin{equation*}
   z(t)<-\sin\left({\rmthree}^{1/2}(t-t_0)-\Sin z(t_0)\right)
  \end{equation*}
 by noting \eqref{Proof-Lem-C-4000}.
 Putting
  \beq
    \label{Proof-Lem-C-T_*}
   \widetilde{T}_*:=t_0+\frac{\Sin z(t_0)}{{\rmthree}^{1/2}},
  \eeq
 where we note $\widetilde{T}_*>t_0$ by
  \begin{equation}
     \label{  Proof-Lem-C-5000}
   \Sin z(t_0)=\Sin \left\{\left(
                  \frac{{\rm{I}}y_0^{2+1/\kappa}}{y_1^2+{\rm{I}}y_0^{2+1/\kappa}}
                        \right)^{1/2}\right\}
    \ \ \in[0,\pi/2],
 \end{equation}
 there exists $T_*\in(t_0,\widetilde{T}_*]$ such that $z\searrow0$ as $t\nearrow T_*$, which yields 
 $y\searrow0$ as $t\nearrow T_*$.
 Since we have 
  \begin{align*}
   \label{Proof-Lem-C-4}
   \widetilde{T}_*-t_0
     &=
    {\rm{I}^{-1/2}}y_0^{-1/2\kappa}
    \Sin \left\{\left(
                  \frac{{\rm{I}}y_0^{2+1/\kappa}}{y_1^2+{\rm{I}}y_0^{2+1/\kappa}}
                        \right)^{1/2}\right\}  
      \nonumber \\
     &\le
    \frac{\pi{\rm{I}^{-1/2}}y_0^{-1/2\kappa}}2
      \nonumber \\
     &=
    \left\{\frac{\pi^2(2\kappa+1)}{8\kappa^2 A}\right\}^{1/2}y_0^{-1/2\kappa}
      \nonumber \\
     &\le
    \left\{\frac{\pi^2(2\kappa+1)}{8\kappa^2 A}\right\}^{1/2}B^{1/2}(T-t_0)^{1/2}
       \\
     &<
    (T-t_0)^{1/2}(T-t_0)^{1/2}=T-t_0
      \nonumber
  \end{align*}
 by \eqref{Proof-Lem-C-T_*}, \eqref{  Proof-Lem-C-5000}, \eqref{Lem-initial}
 and \eqref{Lem-T-condition}, we have $T_*\le \widetilde{T}_*<T$.
\end{proof}

%
%
%

\newsection{Proof of theorems and corollaries}

We prove Theorem \ref{Thm-blow-1}, Theorem \ref{Thm-blow-B}, Corollary \ref{Cor-blowing-up} and Corollary \ref{Cor-blowing-up-B} in this section.
\subsection{Proof of Theorem \ref{Thm-blow-1}}
 \label{Sec-Thm-blow-1}

Let $t_0=0$.
We remember that we have put $a_0:=a(0)$, $a_1:=\dot a(0)$
and
 \begin{equation*}
  \label{Def-T_1}
  T:=\max\left\{1,
            \frac{\pi^2(1+na_1a_0^{-1})\|u_0\|^2}              
                   {\varepsilon^2\rho}  
           \right\}.
 \end{equation*}
We consider the Cauchy problem of \eqref{Cauchy} for $(t,x)\in[0, T)\times\brn$.
For any $t\in[0,T)$, we put
 \beq
  \label{A-3}
  G(t):=
    \int_0^t \left\{\frac{\dot a(\tau)^{2}-\ddot a(\tau)a(\tau)}{a(\tau)^{2}}\right\}\|u(\tau)\| ^2
    d\tau
 \eeq
and
 \begin{equation}
  \label{A-4}
  \theta(t):=
   \|u(t)\| ^2+\int_0^t n\left\{\dot a(\tau)a(\tau)^{-1}\|u(\tau)\| ^2+G(\tau)\right\}d\tau
   +n(T-t)a_1a^{-1}_0\|u_0\| ^2. 
 \end{equation}
We note that $\theta(\cdot)>0$ since we have $G(\cdot)\ge0$ by \eqref{Thm-A-4000}.
By \eqref{A-4}, we have
 \begin{align}
   \label{A-6}
  \theta'
   &=2\Re(u,u_t)+n\dot aa^{-1}\|u\| ^2
       -na_1a^{-1}_0\|u_0\| ^2+nG  \nonumber\\
   &=2\Re(u,u_t)
      +\int_0^t\partial_\tau\left(n\dot aa^{-1}\|u\| ^2\right)d\tau
      +nG  \nonumber\\
   &=2\Re(u,u_t)
     +2\Re\int_0^t n\dot aa^{-1}(u,u_t)d\tau
 \end{align}
and
 \begin{align}
   \label{A-9}
  \theta''
  &= 2\Re(u,u_{tt})+2n\dot aa^{-1}\Re(u,u_t)+2\|u_t\| ^2 \nonumber \\
  &=2\left\{\|u_t\| ^2-I(u)\right\}
  \end{align}
by \eqref{eq-I}.
Since we have $I(u)<0$ by Lemma \ref{B-invariant}, we obtain $\theta''>0$, which yields
 \beq
   \label{A-7.1}
  \theta'(t)>\theta'(0)=2\Re(u_0,u_1)\ge0
 \eeq
for any $t\in(0,T)$ by \eqref{A-6} and the assumption $\Re(u_0,u_1)\ge0$.
By \eqref{A-6}, we have 
 \begin{multline}
   \label{A-10}
  \frac{\left(\theta'(t)\right)^2}4
  =-\eta(t)+\left\{\theta(t)-\int_0^t nG(\tau)d\tau-(T-t)na_1a_0^{-1}\|u_0\| ^2
                 \right\}\\
    \cdot
    \left\{\|u_t\| ^2+\int_0^t n\dot a(\tau)a(\tau)^{-1}\|u_t\| ^2d\tau\right\},
 \end{multline}
where we have put
 \begin{align}
   \label{A-11}
  \eta(t)
  &:=\left\{\|u\| ^2+\int_0^t n\dot a(\tau)a(\tau)^{-1}\|u\| ^2d\tau\right\}
              \left\{\|u_t\| ^2+\int_0^t n\dot a(\tau)a(\tau)^{-1}\|u_t\| ^2d\tau\right\}
    \nonumber \\
  &  \ \ \ \ \ \ \ \ \ \ \ \ \ \ \ \ \ \ \ \ \ \ \ \ \ \ \ \ \ \ \ \ \ \ \ \ \ \ 
      -\left\{\Re(u,u_t)+\Re\int_0^t n\dot a(\tau)a(\tau)^{-1}(u,u_t)d\tau\right\}^2
     \nonumber \\
  &=\|u\|^2\|u_t\|^2-\left\{\Re(u,u_t)\right\}^2  \nonumber \\
  &\ \ \ \ \ \ 
     +\left(\int_0^t n\dot aa^{-1}\|u\| ^2d\tau\right)
       \left(\int_0^t n\dot aa^{-1}\|u_t\| ^2d\tau\right)
     -\left\{\Re \int_0^t n\dot aa^{-1}(u,u_t)d\tau\right\}^2 \nonumber \\
  & \ \ \ \ \ \ 
     +\|u\| ^2\int_0^t n\dot aa^{-1}\|u_t\| ^2d\tau 
             +\|u_t\| ^2\int_0^t n\dot aa^{-1}\|u\| ^2d\tau \nonumber \\
  & \ \ \ \ \ \ \ \ \ \ \ \ \ \ \ 
     -2\left\{\Re(u,u_t)\right\}     
        \left\{\Re \int_0^t n\dot aa^{-1}(u,u_t)d\tau\right\}. 
 \end{align}
We note that we have
 \beq
   \label{A-12}
  \Re(u,u_t)
  \le
  \int_\brn |u\overline{u_t}|dx
  \le
  \|u\| \|u_t\| 
 \eeq
and
 \begin{align}
   \label{A-13}
  \left|\Re \int_0^t n\dot aa^{-1}(u,u_t)d\tau\right|^2
   &\le
    \left(\int_0^t n\dot aa^{-1}\|u\| \|u_t\|d\tau
     \right)^2\nonumber\\
   &\le
    \left(\int_0^t n\dot aa^{-1}\|u\| ^2d\tau\right)
    \left(\int_0^t n\dot aa^{-1}\|u_t\| ^2d\tau\right)
 \end{align}
by the H\"{o}lder inequality.
Using \eqref{A-12} and \eqref{A-13}, we have
 \begin{align}
   \label{A-14}
  \biggl\{\Re
    (u,u_t)\biggr\}
    &\left\{
      \Re \int_0^t n\dot aa^{-1}(u,u_t)d\tau
      \right\}\nonumber\\
    &\le
      \|u\| \|u_t\| 
      \left(\int_0^t n\dot aa^{-1}\|u\| ^2d\tau\right)^{1/2}
      \left(\int_0^t n\dot aa^{-1}\|u_t\| ^2d\tau\right)^{1/2}\nonumber\\
    &\le
      \frac12
      \left(\|u\| ^2\int_0^t n\dot aa^{-1}\|u_t\| ^2d\tau 
             +\|u_t\| ^2\int_0^t n\dot aa^{-1}\|u\| ^2d\tau
      \right),
 \end{align}
where we have used $xy\le(x^2+y^2)/2$ for any $x,y\in\br$ at the last inequality.
By \eqref{A-11}, \eqref{A-12}, \eqref{A-13} and \eqref{A-14}, we have
 \beq
   \label{A-15}
  \eta(t)\ge0
 \eeq
for any $t\in[0,T)$.
So that, we have
 \beq
   \label{A-16}
  \frac{\left\{\theta'(t)\right\}^2}4
  \le
  \theta(t)
  \left\{\|u_t\| ^2+\int_0^t n\dot a(\tau)a(\tau)^{-1}\|u_t\| ^2d\tau\right\}
 \eeq
by $\dot a(\cdot)\ge0$, $G(\cdot)\ge0$ and \eqref{A-15}.
By \eqref{A-16} and \eqref{A-9},
we obtain
 \begin{align}
   \label{A-17}
  \theta(t)\theta''(t)-\frac{\widetilde{\kappa}+3}4\left\{\theta'(t)\right\}^2
   &\ge
  \theta(t)\left\{
               \theta''(t)-(\widetilde{\kappa}+3)  \left(
                  \|u_t\| ^2+\int_0^t n\dot aa^{-1}\|u_t\| ^2d\tau\right)\right\}
     \nonumber \\
   &=
  \theta(t)\zeta(t)
 \end{align}
for any $\widetilde{\kappa}>0$,
where we have put
 \begin{equation}
   \label{A-18}
  \zeta(t):=
  -(\widetilde{\kappa}+1)\|u_t\| ^2-2I(u) 
  -(\widetilde{\kappa}+3)\int_0^t n\dot a(\tau)a(\tau)^{-1}\|u_t\| ^2d\tau.
 \end{equation}
Here, let 
\[
\widetilde{\kappa}=\vep+1.
\]
By \eqref{Rel-E-I}, $\dot a\ge0$, \eqref{eq-Energy} and Lemma \ref{Lem-Thm-A}, we have
 \begin{align}
   \label{A-19}
  \zeta(t)
  &\ge  \vep(c^2a^{-2}\|\nabla u\|^2+m^2c^2\|u\|^2)-2(\vep+2)E(t) 
           -(\vep+4)\int_0^t n\dot aa^{-1}\|u_t\| ^2d\tau   \nonumber \\
  &=-2(\varepsilon+2)\left\{E(t)+\int_0^t n\dot aa^{-1}\|u_t\| ^2d\tau
                \right\}  \nonumber \\
  &\ \ \ \ \ \ \ \ \ \ \ \ \ 
    \ \ \ \ \ \ \ \ \ \ \ \ \ 
    +\varepsilon \int_0^t n\dot aa^{-1}\|u_t\| ^2d\tau 
    +\varepsilon \left(c^2a^{-2}\|\nabla u\| ^2+m^2c^2\|u\| ^2\right)\nonumber\\
  &\ge -2(\varepsilon+2)\left\{E(t)+\int_0^t n\dot aa^{-1}\|u_t\| ^2d\tau\right\}
      +m^2c^2\vep\|u\|^2  \nonumber \\
  &= -2(\varepsilon+2)\left\{E(0)-\int_0^t c^2\dot aa^{-3}\|\nabla u\|^2d\tau\right\}
      +m^2c^2\varepsilon \|u\| ^2 \nonumber\\
  &\ge -2(\varepsilon+2)E(0)+m^2c^2\vep\|u\|^2  \nonumber \\
  &= -2(\varepsilon+2)E(0)+m^2c^2\varepsilon L(t)  \nonumber \\
  &\ge-2(\varepsilon+2)E(0)+m^2c^2\varepsilon L(0) \nonumber \\
  &=2(\varepsilon+2)\rho 
 \end{align}
for any $t\in[0,T)$, 
where $\rho$ and $L$ are defined by \eqref{Thm-A-1000} and \eqref{Def-L}, and we have used $L(t)\ge L(0)$ by the first inequality $L'(t)>0$ for any $t\in(0,T)$ in \eqref{Lem-Thm-A-1000}.
So that, we have
 \beq
   \label{A-21}
  \theta(t)\theta''(t)-\frac{\vep+4}4\left\{\theta'(t)\right\}^2
  \ge2(\varepsilon+2)\rho\theta(t)
 \eeq
for any $t\in[0,T)$ by \eqref{A-17}, \eqref{A-19} and $\theta>0$.
Putting $\kappa:=\varepsilon /4$, we obtain
 \begin{align}
   \label{A-22}
  \left\{\theta^{-\kappa}\right\}''(t)
  &=
    -\kappa\theta(t)^{-\kappa-2}
    \left[\theta(t)\theta''(t)-\frac{\vep+4}4\left\{\theta'(t)\right\}^2\right] \nonumber\\
  &\le-2\kappa(\varepsilon+2)\rho\theta(t)^{-\kappa-1} \nonumber \\
  &=-2\kappa(\varepsilon+2)\rho\left\{\theta(t)^{-\kappa}\right\}^{1+1/\kappa}
 \end{align}
by $\theta>0$ and \eqref{A-21}.
Putting 
\[
y(t):=\theta(t)^{-\kappa}
\]
 and $A:=2(\varepsilon+2)\rho$, we have
\beq
\label{A-22.0}
y''(t)\le -\kappa Ay(t)^{1+1/\kappa}
\eeq
for any $t\in[0,T)$ by \eqref{A-22}.
Since we have
 \[
    0<\theta(0)=\|u_0\|^2+na_1a_0^{-1}T\|u_0\|^2
              \le (1+na_1a_0^{-1})T\|u_0\|^2
 \]
by $T\ge1$, we obtain
\beq
 \label{A-22.1}
y_0\ge (BT)^{-\kappa},
\eeq
where we have put $y_0:=y(0)$ and $B:=(1+na_1a_0^{-1})\|u_0\|^2$.
Since we have 
 \[
  \left\{\theta^{-\kappa}\right\}'(0)
  =-\kappa\theta'(0)\theta^{-\kappa-1}(0)\le0
 \]
by $\theta'(0)\ge0$ and $\theta(0)>0$, we obtain
\beq
\label{A-22.2} 
y_1:=y'(0)\le0.
\eeq
Moreover, we note that we have
\beq
 \label{A-22.3}
\frac{\pi^2(2\kappa+1)B}{8\kappa^2 A}
=
\frac{\pi^2(1+na_1a_0^{-1})\|u_0\|^2}{2\vep^2\rho}
<T
\eeq
by $\kappa=\vep/4$ and \eqref{Estimate-T_1}.
Since $T$ is defined by \eqref{Estimate-T_1}, and $y=\theta^{-\kappa}$ satisfy \eqref{Lem-T-condition}, \eqref{Deq-concavity} and \eqref{Lem-initial} by \eqref{A-22.3}, \eqref{A-22.0}, \eqref{A-22.1} and \eqref{A-22.2},
there exists $T_*\in(0,T)$ such that
$y(t)=\theta(t)^{-\kappa}\searrow0$ as $t\nearrow T_*$ by Lemma \ref{Lem-concavity}. From this fact, we obtain the following result as required in this theorem.

\begin{claim}\label{Claim-blowup}
 $\|u(t)\| \nearrow \infty$ as $t\nearrow T_*$.
\end{claim}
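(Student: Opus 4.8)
The plan is to read off the blow-up of $\|u\|$ from the already-established fact that $y(t)=\theta(t)^{-\kappa}\searrow0$ as $t\nearrow T_*$. Since $\kappa=\vep/4>0$, the relation $\theta^{-\kappa}\to0$ immediately gives $\theta(t)\to\infty$ as $t\nearrow T_*$. The only work remaining is to transfer this divergence of $\theta$ to its leading term $\|u(t)\|^2=L(t)$; in other words, I must rule out the possibility that the divergence of $\theta$ is produced by its auxiliary integral terms while $\|u\|$ itself stays bounded.

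The crux is therefore to show that, on the finite interval $[0,T_*)$, the non-$L$ terms of $\theta$ remain bounded whenever $L$ does. Suppose toward a contradiction that $L(t)=\|u(t)\|^2\le M$ for all $t\in[0,T_*)$. I would bound each of the remaining three terms of $\theta$ using that $T_*<T\le T_0$ is finite and $a\in C^2([0,T_0),(0,\infty))$, so that $a$, $\dot a$, $\ddot a$ are bounded on the compact interval $[0,T_*]$ and $a$ is bounded away from $0$ there. Concretely, $\int_0^t n\dot aa^{-1}L\,d\tau\le nM\int_0^{T_*}\dot aa^{-1}\,d\tau=nM\log\bigl(a(T_*)/a_0\bigr)<\infty$; since $\int_0^\tau(\dot a^2-\ddot aa)a^{-2}\,d\tau'=a_1a_0^{-1}-\dot a(\tau)a(\tau)^{-1}\le a_1a_0^{-1}$ by $\dot a\ge0$, the definition \eqref{A-3} gives $G(\tau)\le Ma_1a_0^{-1}$ and hence $\int_0^t nG\,d\tau\le nMa_1a_0^{-1}T_*<\infty$; and the last term $n(T-t)a_1a_0^{-1}\|u_0\|^2$ is at most $nTa_1a_0^{-1}\|u_0\|^2<\infty$. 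Consequently $\theta$ would be bounded on $[0,T_*)$, contradicting $\theta\to\infty$. Hence $L$ cannot stay bounded, i.e. $\limsup_{t\nearrow T_*}L(t)=\infty$.

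Finally, I would upgrade this $\limsup$ to an honest limit by monotonicity. By the first assertion $L'(t)>0$ of Lemma \ref{Lem-Thm-A}, $L$ is strictly increasing on $(0,T)$, so $\lim_{t\nearrow T_*}L(t)=\sup_{0<t<T_*}L(t)$ exists in $(0,\infty]$; since $L$ is unbounded, this supremum is $+\infty$. Therefore $L(t)=\|u(t)\|^2\nearrow\infty$, which is exactly $\|u(t)\|\nearrow\infty$ as $t\nearrow T_*$. I expect the middle step to be the main obstacle: one must verify that every auxiliary term of $\theta$ is controlled by $\|u\|^2$ on a finite time interval, and this relies essentially on the finiteness of $T_*$ together with the $C^2$-regularity and strict positivity of the scale-function $a$ on $[0,T_0)$.
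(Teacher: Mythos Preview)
Your proof is correct and follows essentially the same line as the paper: both arguments use the identity $\int_0^\tau(\dot a^{2}-\ddot a a)a^{-2}\,d\tau'=a_1a_0^{-1}-\dot a(\tau)a(\tau)^{-1}$ to bound $G$, the monotonicity $L'>0$ from Lemma~\ref{Lem-Thm-A}, and the divergence $\theta\to\infty$. The only cosmetic difference is that the paper obtains a direct pointwise estimate $\theta(t)\le(1+na_1a_0^{-1}t)\|u(t)\|^{2}+n(T-t)a_1a_0^{-1}\|u_0\|^{2}$ by first combining $\dot a a^{-1}\|u\|^{2}+G\le a_1a_0^{-1}\|u\|^{2}$ and then pulling $\|u(t)\|^{2}$ out of the time integral via monotonicity, whereas you argue by contradiction with a putative bound $M$ and invoke monotonicity only at the end to upgrade the $\limsup$ to a limit.
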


\begin{proof}
Since we have  
 \begin{align*}
  G(t)&=\int_0^t \left\{\frac{\dot a(\tau)^{2}-\ddot a(\tau)a(\tau)}{a(\tau)^{2}}\right\}
               \|u(\tau)\| ^2d\tau          \\
       &\le \|u(t)\| ^2\int_0^t \frac{d}{d\tau}\left(-\frac{\dot a}{a}\right)d\tau \\
       &\le \{a_1a_0^{-1}-\dot a(t)a(t)^{-1}\}\|u(t)\| ^2,
 \end{align*}
where we have used \eqref{Thm-A-4000}
and $\|u\|^2$ is non-decreasing by Lemma \ref{Lem-Thm-A},
we obtain
 \begin{align*}
  \theta(t)
  &=
   \|u(t)\| ^2+
   \int_0^t n\left\{\dot a(\tau)a(\tau)^{-1}\|u(\tau)\| ^2+G(\tau)\right\}d\tau
   +n( T-t)a_1a^{-1}_0\|u_0\| ^2 \\
  &\le
   \|u(t)\| ^2
   +\int_0^t na_1a_0^{-1}\|u(\tau)\| ^2d\tau
   +n( T-t)a_1a^{-1}_0\|u_0\| ^2 \\
  &\le 
    \|u(t)\| ^2+ na_1a_0^{-1}t\|u(t)\| ^2
    +n( T-t)a_1a^{-1}_0\|u_0\| ^2.
 \end{align*}
Since $\theta\nearrow\infty$ as $t\nearrow T_*$, we have the claim.
\end{proof}

%
%
%

\subsection{Proof of Theorem \ref{Thm-blow-B}}
 \label{Sec-Thm-blow-B}

We remember the definition of $T$ by \eqref{Estimate-T_delta}.
We define $G$, $\theta$ by
 \begin{equation*}
  G(t):=
    \int_{t_0}^t 
     \left\{\frac{\dot a(\tau)^{2}-\ddot a(\tau)a(\tau)}{a(\tau)^{2}}\right\}\|u(\tau)\| ^2
    d\tau
 \end{equation*}
and
 \begin{multline}
  \label{Def-Theta-2}
  \theta(t):=
   \|u(t)\| ^2+\int_{t_0}^t n\left\{\dot a(\tau)a(\tau)^{-1}\|u(\tau)\| ^2+G(\tau)\right\}d\tau\\
   +n(T-t)\dot a(t_0)a(t_0)^{-1}\|u_0\| ^2 
 \end{multline}
for any $t\in[t_0,T)$ by the similar definition of \eqref{A-3} and \eqref{A-4}.
Then, the similar argument holds
from \eqref{A-6} to \eqref{A-16}, where we have replaced $[0,T)$ with $[t_0,T)$. 
Similarly to the derivation of \eqref{A-17}, we obtain 
\begin{align}
   \label{B-1}
  \theta(t)\theta''(t)-\frac{\widetilde{\kappa}+3}4\left\{\theta'(t)\right\}^2
   &\ge
  \theta(t)\left[
               \theta''(t)-(\widetilde{\kappa}+3)  \left\{
                  \|u_t\| ^2+\int_{t_0}^t n\dot a(\tau)a(\tau)^{-1}\|u_t\| ^2d\tau\right\}\right]
     \nonumber \\
   &=
  \theta(t)\zeta(t)
 \end{align}
for any $\widetilde{\kappa}>0$,
where we have put
 \[
  \zeta(t):=
  -(\widetilde{\kappa}+1)\|u_t\| ^2-2I(u)
  -(\widetilde{\kappa}+3)\int_{t_0}^t n\dot a(\tau)a(\tau)^{-1}\|u_t\| ^2d\tau.
 \]
Here, let
 \[
  \widetilde{\kappa}=\frac{\vep}2+1.
 \]
By \eqref{Rel-E-I}, \eqref{eq-Energy}, $2\vep+1-\widetilde{\kappa}=3\vep/2>0$ and 
$\vep+1-\widetilde{\kappa}=\vep/2$, we have
 \begin{align}
   \label{B-2}
  \zeta(t)
  &\ge -(\widetilde{\kappa}+1)\|u_t\| ^2+(\vep+2)\|u_t\|^2
          +\vep(c^2a^{-2}\|\nabla u\|^2+m^2c^2\|u\|^2)         \nonumber \\
  & \ \ \ \ \ \ \ \ \ \ \ \ \ \ \ \ \ \ \ \ \ \ \ \ \ \ \ \ \ 
          -2(\vep+2)E(t)
          -(\widetilde{\kappa}+3)\int_{t_0}^t n\dot aa^{-1}\|u_t\| ^2d\tau
      \nonumber \\
  &=(\vep+1-\widetilde{\kappa})\|u_t\|^2+\vep(c^2a^{-2}\|\nabla u\|^2+m^2c^2\|u\|^2) 
      \nonumber \\
  & \ \ \ \ \ \ \ \ \  
          -2(\vep+2)\left\{E(t)+\int_{t_0}^t n\dot aa^{-1}\|u_t\| ^2d\tau\right\}
          +(2\vep+1-\widetilde{\kappa})\int_{t_0}^t n\dot aa^{-1}\|u_t\| ^2d\tau
      \nonumber \\
  &=(\vep+1-\widetilde{\kappa})\|u_t\|^2+\vep(c^2a^{-2}\|\nabla u\|^2+m^2c^2\|u\|^2) 
      \nonumber \\
  & \ \ \ \ \ \ \ \ \  
          -2(\vep+2)\left\{E(t_0)-\int_{t_0}^t c^2\dot aa^{-3}\|\nabla u\| ^2d\tau\right\}
          +(2\vep+1-\widetilde{\kappa})\int_{t_0}^t n\dot aa^{-1}\|u_t\| ^2d\tau
      \nonumber \\
  &\ge (\vep+1-\widetilde{\kappa})\|u_t\|^2+m^2c^2\vep\|u\|^2-2(\vep+2)E(t_0)
      \nonumber \\
  &= \frac{\vep}2\|u_t\|^2+m^2c^2\vep\|u\|^2-2(\vep+2)E(t_0) 
 \end{align}
for any $t\in[t_0,T)$. 
By \eqref{B-2}, $x^2+y^2\ge2xy$ for $x,y\in\br$, \eqref{Proof-Lem-BiI-1.5} and \eqref{Thm-B-1000},
we have
\begin{align}
 \label{B-2.5}
  \zeta(t)
  &\ge \frac{\vep}2\|u_t\|^2+m^2c^2\vep\|u\|^2-2(\vep+2)E(t_0) \nonumber\\
 &\ge \frac{\vep}2(\|u_t\|^2+m^2c^2\|u\|^2)-2(\vep+2)E(t_0)  \nonumber \\
  &\ge |m|c\vep\|u\| \|u_t\|-2(\vep+2)E(t_0) \nonumber \\
  &\ge |m|c\vep \Re(u,u_t)-2(\vep+2)E(t_0) \nonumber \\
  &\ge |m|c\vep \Re(u_0,u_1)-2(\vep+2)E(t_0) \nonumber \\
  &=2(\vep+2)\delta
\end{align}
for any $t\in[t_0,T)$.
By \eqref{B-1} and \eqref{B-2.5}, we have
 \beq
   \label{B-3}
  \theta(t)\theta''(t)-\frac{\widetilde{\kappa}+3}4\left\{\theta'(t)\right\}^2
   \ge
  2(\vep+2)\delta\theta(t)
 \eeq
for any $t\in[t_0,T)$. Putting $\kappa:=(\widetilde{\kappa}-1)/4$, we obtain
 \begin{align}
   \label{B-4}
  \left\{\theta^{-\kappa}\right\}''(t)
  &=
    -\kappa\theta^{-\kappa-2}(t)
    \left[\theta(t)\theta''(t)-\frac{\widetilde{\kappa}+3}4\left\{\theta'(t)\right\}^2\right] \nonumber\\
  &\le-2\kappa(\varepsilon+2)\delta\theta^{-\kappa-1}(t)
 \end{align}
by $\theta>0$ and \eqref{B-3}.
Putting
\[
y(t):=\theta(t)^{-\kappa}
\]
and $A:=2(\vep+2)\delta$, we have
\beq
\label{B-6}
y''(t)\le-2\kappa(\varepsilon+2)\delta\theta^{-\kappa-1}(t)
      =-\kappa Ay(t)^{1+1/\kappa}
\eeq
for any $t\in[t_0,T)$ by \eqref{B-4}.
We note that we have
 \[
  0<y(t_0)^{-1/\kappa}=\theta(t_0)
              \le \left\{1+n\dot a(t_0)a(t_0)^{-1}\right\}(T-t_0)\|u_0\|^2=B(T-t_0)
 \]
by \eqref{Def-Theta-2} and $T-t_0\ge1$, and
 \[
  y'(t_0)=\left\{\theta^{-\kappa}\right\}'(t_0)
  =-\kappa\theta'(t_0)\theta(t_0)^{-\kappa-1}\le0,
 \]
by $\theta'(t_0)\ge0$ in \eqref{A-7.1}, where we have put $B:=\left\{1+n\dot a(t_0)a(t_0)^{-1}\right\}\|u_0\|^2$.
This yields
\beq
 \label{B-7}
 y_0\ge\left\{B(T-t_0)\right\}^{-\kappa}, \ \ 
 y_1\le0,
\eeq
where we have put $y_0:=y(t_0)$, $y_1:=y'(t_0)$.
Moreover, we note that we have
\beq
 \label{B-8}
\frac{\pi^2(2\kappa+1)B}{8\kappa^2 A}
=
\frac{\pi^2(\vep+4)\left\{1+n\dot a(t_0)a(t_0)^{-1}\right\}\|u_0\|^2}{\vep^2(\vep+2)\delta}
<T-t_0
\eeq
by \eqref{Estimate-T_delta}.
Since $T$ is defined by \eqref{Estimate-T_delta}, and $y=\theta^{-\kappa}$ satisfy 
\eqref{Lem-T-condition}, \eqref{Deq-concavity} and \eqref{Lem-initial} by \eqref{B-8}, \eqref{B-6} and \eqref{B-7},
there exists $T_*\in(t_0,T)$ such that
$y(t)=\theta(t)^{-\kappa}\searrow0$ as $t\nearrow T_*$ by Lemma \ref{Lem-concavity}. 
For $\|u\|\nearrow\infty$ as $t\nearrow T_*$, we use the similar argument for Claim \ref{Claim-blowup}.

%
%
%

\subsection{Proof of Corollary \ref{Cor-blowing-up}}
 \label{Sec-Cor-blowing-up}

We check the assumptions $\dot a\ge0$ and \eqref{Thm-A-4000}.
By \eqref{Def-a}, we have 
 \begin{equation}
   \label{Proof-Cor-1}
  \frac{\dot a(t)}{a(t)}=H\left\{1+\frac{n(1+\sigma)Ht}2\right\}^{-1},
 \end{equation}
 \begin{equation*} 
  \frac{\ddot a(t)}{a(t)}=H^2\left\{1-\frac{n(1+\sigma)}2\right\}
                                     \left\{1+\frac{n(1+\sigma)Ht}2\right\}^{-2}.
 \end{equation*}
 This yields
 \begin{equation*}
  \frac{\dot a(t)^{2}-\ddot a(t)a(t)}{a(t)^{2}}
  =\frac{n(1+\sigma)H^2}2
         \left\{1+\frac{n(1+\sigma)Ht}2\right\}^{-2}.
 \end{equation*}
So that, we note that
the assumption \eqref{Thm-A-4000} holds if and only if $\sigma\ge-1$ under $H\neq0$, or $\sigma\in\br$ under $H=0$.  
Under $\sigma\ge-1$ and $H\neq0$, we note that $\dot a\ge0$ holds if and only if
$H>0$ by  \eqref{Proof-Cor-1}, $a>0$ and $1+n(1+\sigma)Ht/2>0$.
Under $\sigma\in\br$ and $H=0$, we note that $\dot a=0$ holds.
So that, the assumptions $\dot a\ge0$ and \eqref{Thm-A-4000} hold, and the required result follows from Theorem \ref{Thm-blow-1}.

%
%
%

\subsection{Proof of Corollary \ref{Cor-blowing-up-B}}
 \label{Sec-Cor-blowing-up-B}

In the proof of Corollary \ref{Cor-blowing-up}, we have shown the assumptions $\dot a\ge0$ and \eqref{Thm-A-4000} hold if and only if $H=0$, $\sigma\in\br$, or $H>0$, $\sigma\ge-1$ hold. So that, we check the assumption \eqref{t_0-condition} in the following.
Firstly, let $t_0=0$. Then, we have
\[
\frac{\dot a(t_0)}{a(t_0)}=\frac{\dot a(0)}{a(0)}=H
\]
by \eqref{Proof-Cor-1}. So that, we have
\beq
 \label{Proof-Cor-2}
\eqref{t_0-condition} \iff 
H
\begin{cases}
\le \frac1{nC_\vep} & \mbox{if} \ \ m\neq0, \\
<\infty                  & \mbox{if} \ \ m=0,
\end{cases}
\eeq
where $C_\vep$ is defined by \eqref{Def-C}.
By $H=0$, $\sigma\in\br$, or $H>0$, $\sigma\ge-1$, and \eqref{Proof-Cor-2},
we have checked the cases of (i), (ii) and (iii) in Corollary \ref{Cor-blowing-up-B} satisfy all assumptions of Theorem \ref{Thm-blow-B}.

Next, let $t_0>0$. Since the cases of $m=0$ or $H\le 1/nC_\vep$ have been considered when $t_0=0$, we consider the case of $m\neq0$ and $H>1/nC_\vep$. Then, we have
\beq
 \label{Proof-Cor-3}
\eqref{t_0-condition} 
\iff 
  \frac{\dot a(t_0)}{a(t_0)}
  =H\left\{1+\frac{n(1+\sigma)Ht_0}2\right\}^{-1}
  \le \frac1{nC_\vep} 
\eeq
by \eqref{Proof-Cor-1}.
Noting that the last inequality in \eqref{Proof-Cor-3} does not hold when $\sigma=-1$ and $H>1/nC_\vep$,
we have
\[
\eqref{t_0-condition} 
\iff
 t_0\ge\frac{2C_\vep}{1+\sigma}-\frac2{n(1+\sigma)H}>0
  \ \ \mbox{and} \ \ \sigma>-1 
\]
under $H>1/nC_\vep$.
So that, we have checked the case of (iv) satisfies all assumptions of Theorem \ref{Thm-blow-B}.
Since all assumptions of Theorem \ref{Thm-blow-B} hold in the cases of (i), (ii), (iii) and (iv), the required result follows from Theorem \ref{Thm-blow-B}.

\vspace{10pt}

%

{\bf Acknowledgments.}
This work was supported by JSPS KAKENHI Grant Numbers 16H03940, 17KK0082, 22K18671, and by JST SPRING Grant Number JPMJSP2138.


\end{document}